\documentclass[]{article}
\usepackage{graphicx}
\usepackage{graphics}
\usepackage{amssymb,amsmath,amsthm,epsfig}
\setlength{\textwidth}{6.0in} \setlength{\textheight}{8.5in}
\setlength{\headheight}{.2cm}\setlength{\topmargin}{-0.2cm}
\setlength{\evensidemargin}{0.15 in}
\setlength{\oddsidemargin}{0.15 in}

\newtheorem{theorem}{Theorem}
\newtheorem{lemma}{Lemma}

\newtheorem{proposition}{Proposition}

\newtheorem{remark}{Remark}
\setcounter{page}{1}
\newcommand{\norm}[1]{{\left\|{#1}\right\|}}

\newcounter{reh}
\setcounter{reh}{0}

\newcounter{rek}
\setcounter{rek}{0}

\vspace{5mm}
\begin{document}
\begin{center}
	{\large {\bf Non-asymptotic behavior and the distribution of the spectrum of the finite Hankel transform operator}}\\
	\vskip 1cm Mourad Boulsane$^a$ {\footnote{
			Corresponding author: Mourad Boulsane, Email: boulsane.mourad@hotmail.fr}}
\end{center}
\vskip 0.5cm {\small
\noindent $^a$ University of Carthage,
	Department of Mathematics, Faculty of Sciences of Bizerte,Jarzouna, 7021, Tunisia.
}

\begin{abstract}
For a fixed reals $c>0$, $a>0$ and $\alpha>-\frac{1}{2}$, the circular prolate spheroidal wave functions (CPSWFs) or 2d-Slepian functions as some authors call it, are the eigenfunctions of the finite Hankel transform operator, denoted by $\mathcal{H}_c^{\alpha}$, which is the integral operator defined on $L^2(0,1)$ with kernel $H_c^{\alpha}(x,y)=\sqrt{cxy}J_{\alpha}(cxy)$. Also, they are the eigenfunctions of the positive, self-adjoint compact integral operator $\mathcal{Q}_c^{\alpha}=c\mathcal{H}_c^{\alpha}\mathcal{H}_c^{\alpha}.$ The CPSWFs play a central role in many applications such as the analysis of 2d-radial signals. Moreover, a renewed interest on the CPSWFs instead of Fourier-Bessel basis is expected to follow from the potential applications in Cryo-EM and that makes them attractive for steerable of principal component analysis(PCA). For this purpose, we give in this paper a precise non-asymptotic estimates for these eigenvalues, within the three main regions of the spectrum of $\mathcal{Q}_c^{\alpha}$ as well as these distributions in $(0,1).$ Moreover, we describe a series expansion of CPSWFs with respect to the generalized Laguerre functions basis of $L^2(0,\infty)$ defined by $\psi_{n,\alpha}^a(x)=\sqrt{2}a^{\alpha+1}x^{\alpha+1/2}e^{-\frac{(ax)^2}{2}}\widetilde{L}_n^{\alpha}(a^2x^2)$, where $\widetilde{L}_n^{\alpha}$ is the normalised Laguerre polynomial.
\end{abstract}
\vspace{5mm}
{\bf 2010 Mathematics Subject Classification.}  42C10, 41A60.\\[0.3cm]
{\it Key words and phrases.} Finite Hankel transform operator, eigenfunctions
and eigenvalues, circular prolate spheroidal wave functions.
\section{Introduction}
We recall that for given values of $\alpha>-\frac{1}{2} $ and $c>0$, the circular prolate spheroidal wave functions (CPSWFs), denoted by $\varphi^{\alpha}_{n,c}$, have been discovered and studied in the 1964's since the pionner works by D. Slepian and his co-authors see \cite{Slepian3}. 
They are at first sight the solution of energy maximization problem, then become the different band-limited eigenfunctions of the finite Hankel transform $\mathcal{H}_c^{\alpha}$ defined on $L^2(0,1)$ with kernel $H_c^{\alpha}(x,y)=\sqrt{cxy}J_{\alpha}(cxy)$ where $J_{\alpha}$ is the Bessel function of the first type and order $\alpha>-\frac{1}{2}$, see for example \cite{Karoui-Boulsane}. That is 
\begin{equation}
\mathcal{H}_c^{\alpha}(\varphi^{\alpha}_{n,c})=\mu_{n,\alpha}(c)\varphi^{\alpha}_{n,c}.
\end{equation}
To the operator $\mathcal{H}_c^{\alpha},$ we associate a positive, self-adjoint compact integral operator $\mathcal{Q}_c^{\alpha}=c\mathcal{H}_c^{\alpha}\mathcal{H}_c^{\alpha}$ defined on $L^2(0,1)$  with kernel $K_c^{\alpha}(x,y)=cG_{\alpha}(cx,cy),$ where
\begin{equation} 
G_{\alpha}(x,y)=\begin{cases}\frac{\sqrt{xy}}{x^2-y^2}\left(xJ_{\alpha+1}(x)J_{\alpha}(y)-yJ_{\alpha+1}(y)J_{\alpha}(x)\right)& x\neq y\\ \frac{1}{2}\left((xJ_{\alpha+1}(x))'J_{\alpha}(x)-xJ_{\alpha+1}(x)J_{\alpha}'(x)\right)&  x=y\end{cases}
\end{equation}
We denote by $\lambda_{n}^{\alpha}(c)$ the infinite and countable sequence of the eigenvalue of the operator $\mathcal{Q}_c^{\alpha}$, that is
$\lambda_{n}^{\alpha}(c)=c|\mu_{n}^{\alpha}(c)|^2.$
In his pioneer work \cite{Slepian3}, D. Slepian has shown that the compact integral operator $\mathcal{H}_c^{\alpha}$ commutes with the following differential operator $\mathcal D^{\alpha}_c$ defined on $C^2([0,1])$ by
\begin{equation}
\label{differ_operator1}
\mathcal D^{\alpha}_c (\phi)(x)  = \dfrac{d}{dx} \left[ (1-x^2)\dfrac{d}{dx} \phi(x) \right] + \left( \dfrac{\dfrac{1}{4}-\alpha^2}{x^2}-c^2x^2 \right)\phi(x).
\end{equation}
Hence,  $\varphi^{\alpha}_{n,c}$ is the $n-$th order bounded   eigenfunction of the operator $-\mathcal D^{\alpha}_c,$  associated with the eigenvalue $\chi_{n,\alpha}(c),$ that is
\begin{equation}
\label{differ_operator2}
-\dfrac{d}{dx} \left[ (1-x^2)\dfrac{d}{dx} \varphi^{\alpha}_{n,c}(x) \right] - \left( \dfrac{\dfrac{1}{4}-\alpha^2}{x^2}-c^2x^2 \right)\varphi^{\alpha}_{n,c}(x)=\chi_{n,\alpha}(c)\varphi^{\alpha}_{n,c}(x),\quad x\in [0,1].
\end{equation}
The 2d-Slepian family form an orthonormal basis of $L^2(0,1).$ 
Moreover, they form an orthogonal basis of the Hankel Paley-Wiener space $\mathcal B_c^{\alpha}$, the space of functions from $L^2(0,\infty)$ with Hankel transforms supported on $[0,c]$, 
\begin{equation}\label{eq0.5}
\mathcal H^{\alpha}(\varphi_{n,c}^{\alpha})=\frac{1}{c\mu_{n,\alpha}(c)}\varphi_{n,c}^{\alpha}(\frac{.}{c})\chi_{[0,c]}
\end{equation}
where $\mathcal H^{\alpha}$ is the Hankel transform defined on $L^2(0,\infty)$ with kernel $H^{\alpha}(x,y)=\sqrt{xy}J_{\alpha}(xy).$\\
In this work, we will first study the decay rate of the eigenvalues of the operator
 $\mathcal{Q}_c^{\alpha}$  which is the heartbeat of this paper. In addition,
  the eigenvalues decreases slowly around the value  
   $\kappa=\frac{c}{\pi}-\frac{\alpha}{2}$, but from $\kappa(1+\epsilon),$
  the decay becomes of exponential type then becomes super-exponential. Moreover, we will describe the results of the eigenvalues decay rates in a non asymptotic way in the three main regions of the $\mathcal{Q}_c^{\alpha}$ spectrum . In their seminal work, Abreu and Bandeira \cite{Abreu} have given the following asymptotic estimation of the trace and Hilbert-Schmidt norm of the operator $\mathcal{Q}_c^{\alpha}$  $$\mbox{trace}(\mathcal{Q}_c^{\alpha})=\frac{c}{\pi}+O(1) \hspace{5mm} \mbox{and} \hspace{5mm} ||\mathcal{Q}_c^{\alpha}||_{HS}\geq \frac{c}{\pi}-K\ln(c)-L$$ for some unknown constants $K$ and $L$. In this paper, we are interested in the non-asymptotic estimation of the trace and norm of $\mathcal{Q}_c^{\alpha}$, we improve the previous results by giving the exact values of K and L, that is 
$$\mbox{trace}(\mathcal{Q}_c^{\alpha})=\frac{c}{\pi}-\frac{\alpha}{2}+\frac{\gamma_{\alpha}(c)}{c}\hspace{5mm} \mbox{and} \hspace{5mm}||\mathcal{Q}_c^{\alpha}||_{HS}\geq \frac{c}{\pi}-\frac{\alpha}{2}-K_1\ln(\frac{c}{\pi})-\frac{K_2}{c^2}-L$$  where $\gamma_{\alpha}(c)$ is bounded (see Theorem 3) and  $K_1$, $K_2$ and $L$ are given by Theorem 4. The last result gives us a good idea about the distribution of the eigenvalues in the interval (0,1). Indeed, for any $\epsilon>0$ we have
\begin{equation}
\#\{n ; \epsilon<\lambda_n^{\alpha}(c)<1-\epsilon \}\leq \frac{\mbox{trace}(\mathcal{Q}_c^{\alpha})-||\mathcal{Q}_c^{\alpha}||_{HS}}{\epsilon(1-\epsilon)}\leq \frac{K_1\ln(\frac{c}{\pi})+\frac{K_2}{c^2}+L}{\epsilon(1-\epsilon)}.
\end{equation}
 
The classical Bouwkamp algorithm is the Slepian scheme for the computation of CPSWFs. The latter expands in $\widetilde{T}_{k,\alpha}$ basis of $L^2(0,1)$ defined by $\widetilde{T}_{k,\alpha}(x)=(-1)^k\sqrt{2(2k+\alpha+1)}x^{\alpha+\frac{1}{2}}P_k^{(\alpha,0)}(1-2x^2)$ where the series expansion coefficients check an eigensystem whose matrix becomes diagonally dominant in the case where the order of the prolate is large compared to the band-limit. In the opposite case, this classical method encounters numerical difficulties. Then,  for more analytical informations, we will give a serie expansion of circular prolate in Laguerre functions basis of $L^2(0,\infty)$ defined by $\psi_{n,\alpha}^a(x)=\sqrt{2}a^{\alpha+1}x^{\alpha+1/2}e^{-\frac{(ax)^2}{2}}\widetilde{L}_n^{\alpha}(a^2x^2)$, where $\widetilde{L}_n^{\alpha}$ is the normalised Laguerre polynomial, wich will become the solution of this problem already posed by Xiao and Rokhlin, see \cite{Xiao-Rokhlin}, within the framework of the classical prolate. However, the use of PSWFs has been somewhat crippled by their slightly mysterious reputation as being ”difficult to compute”. This seems to be related to the fact that the classical (”Bouwkamp”) algorithm for their evaluation encounters numerical difficulties for $c > 40$ or so. Moreover, the attempt to diagonalize the finite Fourier transform operator $\mathcal{F}_c$ numerically via straightforward discretization meets with numerical difficulties as well. In addition, the serie expansion coefficients of the classical PSWFs in the Legendre polynomials basis form an eigenvector of a tri-diagonal matrix that becomes diagonally dominant when the order of the function is large compared to the band-limit and in this case we get an efficient approximate path to calculate the classical PSWFs,  but the problem is posed in the case where the order of the function is less than the length of the band, that is what explained and solved Xiao and Rokhlin from the serie expansion of the PSWFs in the Hermites functions basis of $L^2(\mathbb{R})$ defined by $\phi_n^a(x)=e^{-a^2x^2/2}H_n^a(x)$ where $H_n^a$ is the normalised Hermite polynomial,  where the coefficients form an eigenvector of a five-diagonal matrix that becomes diagonally dominant when the band-limit is large compared to  prolate's order.

This work is organised as follows. In section 2, we give some mathematical preliminaries related to the properties and computation of Bessel functions and generalized Laguerre functions. In section 3, we give the behavior of the eigenvalues decay rates $\lambda_{n}^{\alpha}(c)$ during the interval $(0,1).$ The previous results are obtained according to the position of the prolate's order   in relation to the band-limit. The section 4 is devoted to the precise non-asymptotic estimate of the trace and Hilbert-Schmidt norm of the $\mathcal{Q}_c^{\alpha}$ operator which will give us an idea about the eigenvalues distribution in inteval $(0,1)$. Finally, in section 5, we will describe the serie expansion coefficients of the CPSWFs in generalized Laguerre functions.

\section{Mathematical preliminaries}
In this section, we first give a brief description of Laguerre polynomials and non-asymptotic expansion of Bessel functions. These functions are probably among the most frequently used special functions. Broadly speaking, they occur in connection with Sturm-liouville differential equations or in connection with certain definite integrals. \\
For $\alpha>-\frac{1}{2}$, the Bessel's differential equation is given by, see for example \cite{Watson},

\begin{equation*}
x^2y''+xy'+(x^2-\alpha^2)y=0,\quad x>0,
\end{equation*}
$J_{\alpha}$ is the bounded solution of the previous differential equation of the first kind and order $\alpha$. It has the series expansion  :
\begin{equation*}\label{serie}
J_{\alpha}(x)=\sum_{n=0}^{\infty}\frac{(-1)^n}{n!}\frac{(x/2)^{2n+\alpha}}{\Gamma(n+\alpha+1)}.
\end{equation*}
Bounds and local estimates of $J_{\alpha}$ are frequently needed in this work. For a comprehensive review
of these bounds and estimates, the reader is referred to [\cite{Ilia Krasikov}, \cite{A.YA. OLenko}, \cite{Watson}]. A first simple and useful local estimate is
given by, see \cite{A.YA. OLenko}
\begin{equation}
\label{boundJ}
\sup_{x\geq 0} \sqrt{x} |J_{\alpha}(x)| \leq C_{\alpha}(O),
\end{equation}
where \begin{equation*}
\label{constants}
C_{\alpha}(O) =\left\{\begin{array}{ll}
\sqrt{2/\pi} &\mbox{ if } |\alpha|\leq 1/2\\
0.675\sqrt{\alpha^{1/3}+\frac{1.9}{\alpha^{1/3}}+
	\frac{1.1}{\alpha}}&\mbox{ if } \alpha >1/2.\end{array}\right.
\end{equation*}
In \cite{A.YA. OLenko}, Olenko pushed the estimate of Bessel's function as follows
\begin{equation}
\sup_{x\geq 0} x^{3/2}|J_{\alpha}(x)-\sqrt{\frac{2}{\pi x}}\mbox{cos}\left(x-(\frac{\alpha\pi}{2}+\frac{\pi}{4})\right)| \leq d_{\alpha}(O)=2.2b\left(50\sqrt{2\pi}+\frac{3}{13}\left((\alpha+1)^{13/6}-\frac{1}{2^{13/6}}\right)\right)
\end{equation}
where $b=0.674885...$\\
It must be said that the inequality of Olenko is interesting, especially it brings us back to an advanced uniform approximation of Bessel's function. Using the same technique, we can further improve its non-asymptotic developpment and we obtain
\begin{equation}
\sup_{x\geq 0}x^{5/2}|J_{\mu}(x)-\sqrt{\frac{2}{\pi x}}\left(\cos(x-\frac{\mu\pi}{2}-\frac{\pi}{4})-(\mu^2-\frac{1}{4})\frac{\sin(x-\frac{\mu\pi}{2}-\frac{\pi}{4})}{2x}\right)|\leq d_{\mu}\\
\end{equation}
where $d_{\mu}\leq 196\sqrt{\frac{2}{\pi}}+2C(\mu-1)(\mu-2)^{13/6}(\frac{\mu}{2}+\frac{1}{4}),$ with  $C=\left(4.4b+50\sqrt{2\pi}2^{13/6}+\frac{3}{13}\right).$
Note that in \cite{Ilia Krasikov}, Krasikov has improved the Olenko constant as follows
\begin{equation}
\label{boundJ_1}
\sup_{x\geq 0} x^{3/2}|J_{\alpha}(x)-\sqrt{\frac{2}{\pi x}}\cos\left(x-(\frac{\alpha\pi}{2}+\frac{\pi}{4})\right)| \leq C_{\alpha}(K)=\frac{4}{5}|\alpha^2-\frac{1}{4}|.
\end{equation}
The following proposition improves the Krasikov result .

\begin{proposition}
	Let $\mu>-1/2$, then we have
	\begin{equation}\label{BoundJ_2}
	\sup_{x\geq 0}x^{5/2}|J_{\mu}(x)-\sqrt{\frac{2}{\pi x}}\left(\cos(x-\frac{\mu\pi}{2}-\frac{\pi}{4})-(\mu^2-\frac{1}{4})\frac{\sin(x-\frac{\mu\pi}{2}-\frac{\pi}{4})}{2x}\right)|\leq \frac{\beta}{\pi}\left(\frac{4}{5}\beta+\sqrt{\frac{2}{\pi}}\right)=C_{\mu}
	\end{equation}
	where $\beta=|\mu^2-1/4|.$
\end{proposition}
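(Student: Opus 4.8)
The plan is to reduce \eqref{BoundJ_2} to a first-order perturbation analysis of Bessel's equation and to feed Krasikov's one-term estimate \eqref{boundJ_1} into the next order. Write $\nu=\mu^2-\frac14$ (so that $\beta=|\nu|$), $\delta=\frac{\mu\pi}{2}+\frac{\pi}{4}$, $\theta=x-\delta$, and perform the Liouville substitution $u(x)=\sqrt{x}\,J_\mu(x)$, under which Bessel's equation turns into its normal form
\begin{equation*}
u''(x)+\Bigl(1-\frac{\nu}{x^2}\Bigr)u(x)=0,\qquad\text{i.e.}\qquad u''+u=\frac{\nu}{x^2}\,u .
\end{equation*}
Let $\phi(x)=\sqrt{\tfrac2\pi}\cos\theta$ be the normalised leading term, a solution of $\phi''+\phi=0$ carrying the correct $x\to\infty$ asymptotics of $u$. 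Treating $\frac{\nu}{x^2}u$ as a source and applying variation of parameters anchored at $+\infty$ (which is legitimate because \eqref{boundJ} makes $u$ bounded, so the tail integral converges absolutely), I obtain the Volterra integral equation
\begin{equation*}
u(x)=\phi(x)+\nu\int_x^{\infty}\frac{\sin(t-x)}{t^2}\,u(t)\,dt .
\end{equation*}
Differentiating twice and invoking $\phi''+\phi=0$ confirms that this is equivalent to the normal form together with the required decay of $u-\phi$.

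Next I would extract the explicit second-order term. Writing $\Phi(x)=\sqrt{\frac{2}{\pi x}}\bigl(\cos\theta-\nu\frac{\sin\theta}{2x}\bigr)$ for the two-term model of \eqref{BoundJ_2} and $\widetilde\phi(x)=\sqrt{x}\,\Phi(x)$, I insert $u=\phi+(u-\phi)$ into the integral equation and use the product-to-sum identity
\begin{equation*}
\sin(t-x)\cos(t-\delta)=\tfrac12\bigl(\sin(2t-x-\delta)-\sin\theta\bigr).
\end{equation*}
The non-oscillatory contribution $-\tfrac12\sin\theta\int_x^\infty t^{-2}\,dt=-\frac{\sin\theta}{2x}$ reproduces exactly the correction term, since $\nu\sqrt{\tfrac2\pi}\bigl(-\frac{\sin\theta}{2x}\bigr)=\widetilde\phi(x)-\phi(x)$. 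Subtracting, I arrive at the exact remainder identity
\begin{equation*}
u(x)-\widetilde\phi(x)=\underbrace{\frac{\nu}{2}\sqrt{\tfrac2\pi}\int_x^{\infty}\frac{\sin(2t-x-\delta)}{t^2}\,dt}_{(\mathrm{I})}+\underbrace{\nu\int_x^{\infty}\frac{\sin(t-x)}{t^2}\bigl(u(t)-\phi(t)\bigr)\,dt}_{(\mathrm{II})} .
\end{equation*}
Because $x^{5/2}\bigl|J_\mu(x)-\Phi(x)\bigr|=x^2\,|u(x)-\widetilde\phi(x)|$, it remains to bound $x^2\bigl(|(\mathrm{I})|+|(\mathrm{II})|\bigr)$ by $C_\mu$ uniformly on $(0,\infty)$.

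For $(\mathrm{II})$ I would substitute Krasikov's estimate \eqref{boundJ_1} in the form $|u(t)-\phi(t)|\le\frac{4\beta}{5t}$, reducing the task to controlling $\frac{4\beta^2}{5}\int_x^\infty t^{-3}|\sin(t-x)|\,dt$; for $(\mathrm{I})$ it suffices to control the purely oscillatory integral $\int_x^\infty t^{-2}\sin(2t-x-\delta)\,dt$, whose $O(x^{-2})$ size is produced by cancellation (one integration by parts, or the second mean value theorem against the decreasing weight $t^{-2}$). Both integrals are of the correct order $x^{-2}$, so the scheme produces a bound proportional to $\beta\bigl(\frac45\beta+\sqrt{\tfrac2\pi}\bigr)$. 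The hard part will be sharpening the numerical constant: a crude $|\sin|\le1$ gives the right power of $x$ but a constant larger than $C_\mu$ by the factor $\pi/2$, and the factorisation $C_\mu=\frac{\beta}{\pi}\bigl(\frac45\beta+\sqrt{\tfrac2\pi}\bigr)$ signals that one must gain exactly the averaging factor $\frac2\pi$, the mean value of $|\sin|$ over a period. Concretely, I would prove and use the weighted oscillation estimate $\int_a^\infty |\sin(t-a)|\,w(t)\,dt\le\frac2\pi\int_a^\infty w(t)\,dt$ for positive decreasing $w$ (via the second mean value theorem, noting that $|\sin|-\frac2\pi$ is anti-correlated with a decreasing weight over each half-period) for $(\mathrm{II})$, together with the signed cancellation in $(\mathrm{I})$, and then reconcile the two contributions against the clean factorised target. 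Matching this constant, rather than merely the decay rate, is where I expect essentially all the difficulty to lie.
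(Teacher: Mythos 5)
Your scaffolding is, after unwinding, the same as the paper's: the paper also forms the two-term remainder $K(x)=\sqrt{\tfrac{\pi x}{2}}J_{\mu}(x)-\cos(x-\omega_{\mu})+\nu\frac{\sin(x-\omega_{\mu})}{2x}$ (so $K=\sqrt{\pi/2}\,\bigl(u-\widetilde\phi\bigr)$ in your notation, with $\delta=\omega_{\mu}$), solves $K''+K=\text{source}$ by the same tail Volterra integrals anchored at $+\infty$, and uses exactly your two ingredients: Krasikov's one-term bound \eqref{boundJ_1} for the iterated term, and the averaging inequality $\int_0^\infty f(t)|\sin t|\,dt\le\frac{2}{\pi}\int_0^\infty f(t)\,dt$ for nonnegative decreasing $f$. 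Your exact remainder identity is correct, and your term $(\mathrm{I})$ is pointwise equal to the paper's corresponding term: one integration by parts shows
\begin{equation*}
\frac12\int_x^\infty\frac{\sin(2t-x-\delta)}{t^2}\,dt=\int_x^\infty\frac{\sin(t-x)\,\sin(t-\delta)}{t^3}\,dt=\frac{\cos(x-\delta)}{4x^2}-\frac12\int_x^\infty\frac{\cos(2t-x-\delta)}{t^3}\,dt .
\end{equation*}
Your treatment of $(\mathrm{II})$ is precisely the paper's and delivers the piece $\frac{4\beta^2}{5\pi}$ of $C_\mu$ without loss.

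The gap is exactly where you predicted, and the tools you name for $(\mathrm{I})$ would fail to close it. In the form you keep $(\mathrm{I})$ --- a signed oscillation at frequency $2$ against the weight $t^{-2}$ --- the second mean value theorem or one integration by parts with crude bounds gives only $x^2\bigl|\int_x^\infty t^{-2}\sin(2t-x-\delta)\,dt\bigr|\le 1$; even the refined estimate (boundary term $\frac{|\cos(x-\delta)|}{2x^2}$ plus the averaging inequality on the $t^{-3}$ tail) gives the uniform constant $\frac12+\frac1\pi\approx0.82$, still above the needed $\frac2\pi\approx0.64$. Your weighted oscillation lemma cannot be applied to $(\mathrm{I})$ directly because inserting $|\sin|$ at weight $t^{-2}$ produces only $O(x^{-1})$. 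The missing idea is the regrouping displayed above: rewrite $(\mathrm{I})$ as $\nu\sqrt{2/\pi}\int_x^\infty \sin(t-x)\sin(t-\delta)\,t^{-3}\,dt$, bound $|\sin(t-\delta)|\le 1$, and then the lone factor $|\sin(t-x)|$ against the decreasing weight $t^{-3}$ is exactly where the averaging lemma gains $\frac2\pi$, namely $\int_x^\infty|\sin(t-x)|\,t^{-3}\,dt\le\frac{2}{\pi}\cdot\frac{1}{2x^2}$. The paper does this in bundled form: it writes a single integral $K(x)=\sqrt{\pi/2}\,\nu\int_x^\infty \frac{\sin(t-x)}{t^3}\bigl[t(u-\phi)(t)+\sqrt{2/\pi}\sin(t-\omega_{\mu})\bigr]dt$, bounds the bracket by $\frac45\beta+\sqrt{2/\pi}$ via \eqref{boundJ_1}, and applies the averaging lemma once, which yields $|K(x)|\le\sqrt{\pi/2}\,\beta\bigl(\frac45\beta+\sqrt{2/\pi}\bigr)\frac{1}{\pi x^2}$ and hence $C_\mu$ after multiplying by $\sqrt{2/\pi}\,x^2$. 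With that one regrouping your argument becomes the paper's proof; without it, the constant you can prove exceeds $C_\mu$ by a factor between $\frac{\pi}{2}\bigl(\frac12+\frac1\pi\bigr)$ and $\frac{\pi}{2}$ on the $\beta\sqrt{2/\pi}$ term.
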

\begin{proof}
	Let
	\begin{eqnarray*} 
		K(x)&=&\sqrt{\frac{\pi x}{2}}J_{\mu}(x)-\mbox{cos}(x-\omega_{\mu})+(\mu^2-1/4)\frac{\mbox{sin}(x-\omega_{\mu})}{2x}\\
		&=& r(x)+(\mu^2-1/4)\frac{\mbox{sin}(x-\omega_{\mu})}{2x},
	\end{eqnarray*}
	where $\omega_{\mu}=\frac{\mu\pi}{2}+\frac{\pi}{4}.$ Then $K$ satisfies the following differential equation
	\begin{eqnarray*}
		K''(x)+K(x)&=&r''(x)+r(x)+(\mu^2-1/4)\frac{\mbox{sin}(x-\omega_{\mu})-x\mbox{cos}(x-\omega_{\mu})}{x^3}\\
		&=&\sqrt{\frac{\pi}{2x^3}}(\mu^2-1/4)J_{\mu}(x)+(\mu^2-1/4)\frac{\mbox{sin}(x-\omega_{\mu})-x\mbox{cos}(x-\omega_{\mu})}{x^3}.
	\end{eqnarray*}
	We know from \cite{Ilia Krasikov} that the solution of the differential equation 
	$$K_1''(x)+K_1(x)=\sqrt{\frac{\pi}{2x^3}}(\mu^2-1/4)J_{\mu}(x)$$ is given by
	\begin{equation}
	K_1(x)=\sqrt{\frac{\pi}{2}}(\mu^2-1/4)\int_x^{\infty}\frac{\mbox{sin}(t-x)}{t^{3/2}}J_{\mu}(t)dt.
	\end{equation}
	We can easly prove that the solution of the last differential equation is given by
	$$K_2''(x)+K_2(x)=(\mu^2-1/4)\frac{\mbox{sin}(x-\omega_{\mu})-x\mbox{cos}(x-\omega_{\mu})}{x^3}.$$

	\begin{equation}
	K_2(x)=(\mu^2-1/4)\int_x^{\infty}\mbox{sin}(t-x)\frac{\mbox{sin}(t-\omega_{\mu})-t\mbox{cos}(t-\omega_{\mu})}{t^3}dt.
	\end{equation}
	We conclude that
	\begin{equation}
	K(x)=K_1(x)+K_2(x)=\sqrt{\frac{\pi}{2}}(\mu^2-1/4)\int_x^{\infty}\frac{\mbox{sin}(t-x)}{t^{5/2}}\left(tJ_{\mu}(t)+\sqrt{\frac{2}{\pi}}h_{\mu}(t)\right)dt,
	\end{equation}
	where $h_{\mu}(t)=\displaystyle\frac{\mbox{sin}(t-\omega_{\mu})-t\mbox{cos}(t-\omega_{\mu})}{\sqrt{t}}.$ For the quantity
	\begin{equation*}
	\sqrt{t}\left(tJ_{\mu}(t)+\sqrt{\frac{2}{\pi}}h_{\mu}(t)\right)=t^{3/2}\left(J_{\mu}(t)-\sqrt{\frac{2}{\pi t}}\mbox{cos}(t-\omega_{\mu})\right)+\sqrt{\frac{2}{\pi}}\mbox{sin}(t-\omega_{\mu}),
	\end{equation*}
	we have from \cite{Ilia Krasikov}, $$\sqrt{t}\left|tJ_{\mu}(t)+\sqrt{\frac{2}{\pi}}h_{\mu}(t)\right|\leq \frac{4}{5}|\mu^2-1/4|+\sqrt{\frac{2}{\pi}}$$
	Using the last inequality and the following result, for a non-negative and decreasing function $f\in L^1(0,\infty)$ 
	$$\int_0^{\infty}f(t)|\mbox{sin}(t)|dt\leq \frac{2}{\pi}\int_0^{\infty}f(t)dt $$ 
	one gets
	\begin{eqnarray} 
	|K(x)|&=&|\sqrt{\frac{\pi x}{2}}J_{\mu}(x)-\left(\mbox{cos}(x-\omega_{\mu})-(\mu^2-1/4)\frac{\mbox{sin}(x-\omega_{\mu})}{2x}\right)|\\
	&\leq&\sqrt{\frac{2}{\pi}}\frac{\left(\frac{4}{5}|\mu^2-1/4|+\sqrt{\frac{2}{\pi}}\right)|\mu^2-1/4|}{2x^2}.
	\end{eqnarray}
	This concludes the proof of the proposition.
\end{proof}

 Next, the generalised Laguerre polynomials $(L_n^{\alpha})$ associated with non-negative integer $n$ and real number $\alpha>-1$ are solutions of the following second-order linear differential equation:
 \begin{equation}\label{diff,Lag}
 xy''+(\alpha+1-x)y'+ny=0.
 \end{equation}
  The Laguerrepolynomials satisfy the following recurrence formula, for every $x\geq 0$
  \begin{equation}\label{recu.Lag}
\begin{cases}L_0^{\alpha}(x)=1\\ L_{n+1}^{\alpha}(x)=\frac{2n+1+\alpha-x}{n+1}L_n^{\alpha}(x)-\frac{n+\alpha}{n+1}L_{n-1}^{\alpha}(x) \hspace{5mm} n\geq 1\end{cases}
 \end{equation}
Also,they are given by the Rodriguez formula  $$L_n^{\alpha}(x)=\displaystyle\frac{x^{-\alpha}e^x}{n!}\frac{d^n}{dx^n}(e^{-x}x^{n+\alpha}).$$ Moreover, the family $\left\{\widetilde{L}_n^{\alpha}=\sqrt{\frac{n!}{\Gamma(n+\alpha+1)}}L_n^{\alpha}\right\}$ constitues a complete orthonormal system in $L^2((0,\infty),\omega_{\alpha})$ where $\omega_{\alpha}(x)=x^{\alpha}e^{-x}$, that is
\begin{equation}
\int_0^{\infty}L_n^{\alpha}(x)L_m^{\alpha}(x)x^{\alpha}e^{-x}dx=\frac{\Gamma(n+\alpha+1)}{n!}\delta_{nm}.
\end{equation}
Consider the family of generalised Laguerre functions defined by $$\psi_{n,\alpha}^{a}(x)=\sqrt{2}a^{\alpha+1}x^{\alpha+1/2}e^{-\frac{(ax)^2}{2}}\widetilde{L}_n^{\alpha}(a^2x^2).$$\\
The family $(\psi_{n,\alpha}^a)$ is an orthonormal basis of $L^2(0,\infty)$. For $a=1$, the latter becomes the sequence of eigenfunctions of the Hankel transform defined in $L^2(0,\infty)$ by
\begin{equation}
\mathcal{H}^{\alpha}(f)(x)=\int_0^{\infty}\sqrt{xy}J_{\alpha}(xy)f(y)dy,\end{equation} see for example \cite{Andrews}, that is 
\begin{equation}\label{eq0.3} \mathcal{H}^{\alpha}(\psi_{n,\alpha}^a)=\frac{(-1)^n}{a}\psi_{n,\alpha}^a(\frac{.}{a^2}).
\end{equation}

\section{Estimates of the eigenvalues.}
In this paragraph, we extend to the case of the finite Hankel transform the recent non-asymptotic results concerning the behaviour of the spectrum of the Sinc-kernel operator. In particular, we show how the $\lambda_{n}^{\alpha}(c)$ converge to 1. Also, we give a decay rate of the $\lambda_{n}^{\alpha}(c)$ near the plunge region around $\frac{c}{\pi}.$ Finally, we give a super-exponential decay rate for the $\lambda_{n}^{\alpha}(c).$ The following lemma is needed in the proof of our first result concerning the behavior of the $\lambda_{n}^{\alpha}(c).$
	\begin{lemma}
		\label{lemma 0.1}
		Let $\alpha\geq -1/2$ and $(L_{n}^{\alpha})$  be the sequence of the laguerre polynomials of order $\alpha.$ 
		Then for every $n\geq 0$ we have
		\begin{equation} 
		|L_{n}^{\alpha}(x)|\leq \frac{x^n}{n!} \hspace{5mm}\forall\hspace{0.5mm} x\geq 4n+C_{\alpha}
		\end{equation}
		where $C_{\alpha}=
		\begin{cases} 2(\alpha+1)&\mbox{if}~~\displaystyle\alpha\geq 1/2\\
		2(\alpha+1)+1/16&\mbox{if}\displaystyle -1/2\leq \alpha<1/2.
		\end{cases}
		$
	\end{lemma}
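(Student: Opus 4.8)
The plan is to argue by strong induction on $n$, using the three-term recurrence \eqref{recu.Lag} rather than the explicit power-series expansion of $L_n^\alpha$. The reason to avoid a direct term-by-term estimate is instructive: writing $L_n^\alpha$ in descending powers of $x$, the subleading term has magnitude $\frac{n(n+\alpha)}{x}$ times the leading term $\frac{x^n}{n!}$, and this factor is of order $n$ when $x$ is only of order $4n$. Hence the leading term does \emph{not} dominate the tail, the summands are far from monotone on the relevant range, and no naive geometric-type bound can reach the threshold $4n+C_\alpha$. The recurrence circumvents this obstruction completely, reducing the whole estimate to a scalar quadratic inequality.

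For the base cases I would verify $n=0$ and $n=1$ directly. One has $L_0^\alpha\equiv 1\le 1=x^0/0!$, and $L_1^\alpha(x)=(\alpha+1)-x$, so for $x\ge 4+C_\alpha$ (in particular $x>\alpha+1$) we get $|L_1^\alpha(x)|=x-(\alpha+1)\le x$. Both hold on the asserted ranges since $C_\alpha\ge 2(\alpha+1)>0$.

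For the inductive step, fix $n\ge 1$, assume the bound for $L_n^\alpha$ on $x\ge 4n+C_\alpha$ and for $L_{n-1}^\alpha$ on $x\ge 4(n-1)+C_\alpha$, and prove it for $L_{n+1}^\alpha$ on $x\ge 4(n+1)+C_\alpha$; both hypotheses apply there because that threshold dominates the two nested ones. The key point is that $4(n+1)+C_\alpha\ge 2n+1+\alpha$, so the coefficient $\frac{2n+1+\alpha-x}{n+1}$ in \eqref{recu.Lag} is nonpositive, while $\frac{n+\alpha}{n+1}>0$ for $n\ge 1$; the triangle inequality then gives
\begin{equation*}
|L_{n+1}^\alpha(x)|\le \frac{x-(2n+1+\alpha)}{n+1}\,|L_n^\alpha(x)|+\frac{n+\alpha}{n+1}\,|L_{n-1}^\alpha(x)|.
\end{equation*}
Substituting the two inductive bounds and multiplying the target inequality $|L_{n+1}^\alpha(x)|\le \frac{x^{n+1}}{(n+1)!}$ through by $\frac{(n+1)!}{x^{n-1}}$, everything collapses to $x^2-(2n+1+\alpha)x+n(n+\alpha)\le x^2$, i.e. to $x\ge \frac{n(n+\alpha)}{2n+1+\alpha}$. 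Since $\frac{n(n+\alpha)}{2n+1+\alpha}<n<4(n+1)+C_\alpha$, this holds with room to spare, closing the induction.

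The only things needing care are the two range checks — that the threshold $4(n+1)+C_\alpha$ simultaneously exceeds the nested thresholds of the two hypotheses and exceeds $2n+1+\alpha$ (so the middle coefficient has the right sign) — together with the final scalar inequality; all three follow at once from $C_\alpha\ge 2(\alpha+1)$. I do not anticipate a genuine obstacle beyond the initial decision to induct through \eqref{recu.Lag}: once that is done the estimate is essentially a one-line quadratic comparison. It is worth noting that this argument in fact delivers the bound with $C_\alpha=2(\alpha+1)$ for \emph{every} $\alpha\ge-1/2$, which is marginally stronger than the stated constant; the extra $\tfrac1{16}$ allowed for $-\tfrac12\le\alpha<\tfrac12$ is therefore harmless, as enlarging $C_\alpha$ only restricts the range on which the inequality is claimed.
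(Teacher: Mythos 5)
Your proof is correct, but it is a genuinely different argument from the one in the paper. The paper's proof is not inductive: it writes $L_n^{\alpha}(x)$ as an $n$-fold iterated integral of the constant $L_0^{\alpha+n}$, using the derivative identity $\frac{d}{dx}L_n^{\alpha}=-L_{n-1}^{\alpha+1}$ and integrating each time from the largest zero of the corresponding polynomial; this yields $|L_n^{\alpha}(x)|\leq x^n/n!$ for all $x\geq\gamma_n^{\alpha}$, the largest zero of $L_n^{\alpha}$, and the threshold $4n+C_{\alpha}$ then comes from Gatteschi's bound \cite{L.G}, $\gamma_n^{\alpha}\leq 2n+\alpha+1+\left[(2n+\alpha+1)^2+1/4-\alpha^2\right]^{1/2}$ -- which is precisely where the extra $1/16$ for $-1/2\leq\alpha<1/2$ originates (the term $1/4-\alpha^2$ is positive only in that range). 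You instead induct through the three-term recurrence \eqref{recu.Lag}, and your reduction is sound: on $x\geq 4(n+1)+C_{\alpha}$ the middle coefficient is nonpositive, both inductive hypotheses apply since their thresholds are nested, and after clearing $x^{n-1}(n+1)!^{-1}$ the estimate collapses to $x\geq n(n+\alpha)/(2n+1+\alpha)$, which holds with a large margin; the base cases $n=0,1$ are immediate. Each route buys something: the paper's argument identifies the threshold conceptually as an upper bound for $\gamma_n^{\alpha}$, i.e.\ the bound is valid on the entire region beyond the oscillatory zone, which is sharper in spirit since $\gamma_n^{\alpha}\sim 4n$ only asymptotically; your argument is self-contained (no citation to zero estimates needed), delivers the uniform constant $C_{\alpha}=2(\alpha+1)$ for all $\alpha\geq -1/2$, and in fact, since the quadratic comparison only requires $x$ of order $n/2$ and the sign condition only $x\geq 2n+1+\alpha$, your induction would close under a weaker threshold of roughly $2n+\alpha+1$ -- though such a threshold could not come from the zero-based method, as it would fall inside the oscillatory region where $|L_n^{\alpha}|$ is small for a different reason. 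One cosmetic remark: the triangle inequality holds regardless of the sign of the middle coefficient; you need its nonpositivity only to write its absolute value as $\frac{x-(2n+1+\alpha)}{n+1}$, which is exactly how you use it, so nothing is amiss.
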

	\begin{proof}
		Let $\gamma_{n}^{\alpha}$ be the largest zero of the Laguerre polynomial $L_{n}^{\alpha}$, then for every $x\geq \gamma_{n}^{\alpha}$ we have
		$L_{n}^{\alpha}(x)=\displaystyle\int_{\gamma_{n}^{\alpha}}^x\frac{dL_{n}^{\alpha}}{dt}(t_1)dt_1.$ By \cite{Andrews}, we have  $\displaystyle\frac{dL_{n}^{\alpha}}{dt}=-L_{n-1}^{\alpha+1}$, then we obtain
		\begin{eqnarray*}
			L_{n}^{\alpha}(x)&=&(-1)^n\int_{\gamma_{n}^{\alpha}}^x\int_{\gamma_{n-1}^{\alpha+1}}^{t_1}\int_{\gamma_{n-2}^{\alpha+2}}^{t_2}...\int_{\gamma_{0}^{\alpha+n}}^{t_n}L_{0}^{\alpha+n}(t_n)dt_ndt_{n-1}...dt_1\\&=&(-1)^n\int_{\gamma_{n}^{\alpha}}^x\int_{\gamma_{n-1}^{\alpha+1}}^{t_1}\int_{\gamma_{n-2}^{\alpha+2}}^{t_2}...\int_{\gamma_{0}^{\alpha+n}}^{t_n}1dt_ndt_{n-1}...dt_1.
		\end{eqnarray*}
		Consequently, for every $x\geq \gamma_{n}^{\alpha},$ we have
		\begin{eqnarray*}
			|L_{n}^{\alpha}(x)|&\leq& \int_{\gamma_{n}^{\alpha}}^x\int_{\gamma_{n-1}^{\alpha+1}}^{t_1}\int_{\gamma_{n-2}^{\alpha+2}}^{t_2}...\int_{\gamma_{0}^{\alpha+n}}^{t_n}dt_ndt_{n-1}...dt_1.\\
			&\leq& \int_0^x\int_0^{t_1}\int_0^{t_2}...\int_0^{t_n}dt_ndt_{n-1}...dt_1.=\frac{x^n}{n!}.
		\end{eqnarray*}
		By \cite{L.G}, we have an upper bound of $\gamma_{n}^{\alpha}$ given by 
		\begin{eqnarray*}
			\gamma_{n}^{\alpha}&\leq& \delta_n^{\alpha}:=2n+\alpha+1+\left[(2n+\alpha+1)^2+1/4-\alpha^2\right]^{1/2}\\
			&\leq& 4n+C_{\alpha}.
		\end{eqnarray*}
	Hence, for every $x\geq 4n+ C_{\alpha},$ we have $|L_{n}^{\alpha}(x)|\leq \frac{x^n}{n!}.$	
	\end{proof}
The following theorem and its proof are largely inspired and follows the same lines of proof of similar theorem given in \cite{BJK} in the case of the eigenvalues of the Sinc-kernel operator.
	\begin{theorem}
		Let $\alpha\geq -1/2$, then under the above notation, we have, for every $c>C_{\alpha}$ and $n<\frac{c-C_{\alpha}}{4}$
		\begin{equation}
		\lambda_n^{\alpha}(c)\geq 1-\left(\frac{20}{3}\right)c^{\alpha}\frac{c^{2n}}{n!}e^{-c}.
		\end{equation}
		
	\end{theorem}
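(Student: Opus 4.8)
The plan is to combine the min--max characterisation of the eigenvalues of $\mathcal Q_c^\alpha$ with the scaled generalised Laguerre functions $\psi_{k,\alpha}^{\sqrt c}$ as (approximate) eigenfunctions, so that Lemma~\ref{lemma 0.1} controls the resulting error precisely on the range where it is needed.

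First I would rewrite the Rayleigh quotient of $\mathcal Q_c^\alpha$ as an out-of-band energy. For $f$ supported in $[0,1]$ one has $\mathcal H_c^\alpha f(x)=\mathcal H^\alpha f(cx)$, so by self-adjointness of $\mathcal H_c^\alpha$ and the change of variable $\xi=cx$,
\[
\langle \mathcal Q_c^\alpha f,f\rangle=c\|\mathcal H_c^\alpha f\|^2=\int_0^c|\mathcal H^\alpha f(\xi)|^2\,d\xi .
\]
Since $\mathcal H^\alpha$ is unitary on $L^2(0,\infty)$ and $f$ is supported in $[0,1]$, Plancherel gives $\|f\|_{L^2(0,1)}^2=\|\mathcal H^\alpha f\|_{L^2(0,\infty)}^2$, whence $\frac{\langle \mathcal Q_c^\alpha f,f\rangle}{\|f\|^2}=1-\frac{\int_c^\infty|\mathcal H^\alpha f(\xi)|^2\,d\xi}{\|f\|^2}$. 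By the min--max principle, for any $(n+1)$-dimensional subspace $V\subset L^2(0,1)$ we have $\lambda_n^\alpha(c)\geq \min_{f\in V}\frac{\langle \mathcal Q_c^\alpha f,f\rangle}{\|f\|^2}$, so it suffices to exhibit an $(n+1)$-dimensional $V$ on which the out-of-band energy ratio is at most the announced quantity.

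The decisive point is the choice $a=\sqrt c$. I would take $V=\mathrm{span}\{\psi_{0,\alpha}^{\sqrt c}\chi_{[0,1]},\dots,\psi_{n,\alpha}^{\sqrt c}\chi_{[0,1]}\}$. For the un-truncated functions the Hankel eigenfunction relation \eqref{eq0.3} gives $\mathcal H^\alpha\psi_{k,\alpha}^{\sqrt c}=\frac{(-1)^k}{\sqrt c}\psi_{k,\alpha}^{\sqrt c}(\cdot/c)$, and the substitution $u=cx^2$ turns the out-of-band energy into a pure Laguerre tail,
\[
\int_c^\infty|\mathcal H^\alpha\psi_{k,\alpha}^{\sqrt c}(\xi)|^2\,d\xi=\int_1^\infty|\psi_{k,\alpha}^{\sqrt c}(x)|^2\,dx=\int_c^\infty u^{\alpha}e^{-u}\big(\widetilde L_k^\alpha(u)\big)^2\,du .
\]
This is exactly where Lemma~\ref{lemma 0.1} enters: the hypothesis $n<\frac{c-C_\alpha}{4}$ guarantees $c\geq 4k+C_\alpha$ for every $k\leq n$, so $|L_k^\alpha(u)|\leq u^k/k!$ holds on the whole interval $[c,\infty)$, giving $\big(\widetilde L_k^\alpha(u)\big)^2\leq u^{2k}/(k!\,\Gamma(k+\alpha+1))$ and hence a bound by the incomplete Gamma integral $\int_c^\infty u^{2k+\alpha}e^{-u}\,du$. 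A routine estimate of this integral (legitimate since $c>2k+\alpha$ here) of the shape $\int_c^\infty u^{2k+\alpha}e^{-u}\,du\leq 2\,c^{2k+\alpha}e^{-c}$, followed by the geometric domination $\sum_{k=0}^n c^{2k}/k!\leq \frac{c^{2n}}{n!}(1-n/c^2)^{-1}$, produces the factor $c^\alpha\frac{c^{2n}}{n!}e^{-c}$.

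The main obstacle is that $\psi_{k,\alpha}^{\sqrt c}$ is neither supported in $[0,1]$ nor exactly band-limited, so the clean identity above holds only for the un-truncated functions; passing to the admissible test functions $\psi_{k,\alpha}^{\sqrt c}\chi_{[0,1]}$ I must control the truncation error, which is itself of size $\big(\int_1^\infty|\psi_{k,\alpha}^{\sqrt c}|^2\big)^{1/2}$, i.e. of the same order as the main term. I would handle this by splitting $\mathcal H^\alpha(\psi_{k,\alpha}^{\sqrt c}\chi_{[0,1]})=\mathcal H^\alpha\psi_{k,\alpha}^{\sqrt c}-\mathcal H^\alpha(\psi_{k,\alpha}^{\sqrt c}\chi_{(1,\infty)})$ and using the isometry of $\mathcal H^\alpha$ on the second piece, together with the near-orthonormality of the truncated family (again a tail estimate) to pass from the individual quadratic forms to the quotient over all of $V$. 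Tracking these constants sharply enough is the only delicate part; everything else is the incomplete-Gamma and geometric-sum bookkeeping that yields the explicit $\frac{20}{3}$.
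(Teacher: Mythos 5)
Your proposal is correct, and it reaches the theorem by the time-side (dual) implementation of what the paper does on the frequency side. The shared core is identical: the choice $a=\sqrt{c}$, so that by \eqref{eq0.3} the out-of-band energy of $\psi_{k,\alpha}^{\sqrt{c}}$ equals its spatial tail $\int_1^{\infty}|\psi_{k,\alpha}^{\sqrt{c}}|^2$; Lemma \ref{lemma 0.1}, applicable on all of $[c,\infty)$ precisely because $k\leq n<\frac{c-C_{\alpha}}{4}$; the min--max principle over the span of the first $n+1$ scaled Laguerre functions; and the same geometric-sum bookkeeping $\sum_{k=0}^{n}c^{2k}/k!\leq \frac{4}{3}\,c^{2n}/n!$. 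The difference is the decomposition: you truncate in time, taking $f=F\chi_{[0,1]}$ with $F=\sum_j \gamma_j \psi_{j,\alpha}^{\sqrt c}$ and writing the Rayleigh quotient as $1-\int_c^{\infty}|\mathcal{H}^{\alpha}f|^2/\|f\|^2$, whereas the paper projects in frequency, taking $f=P_cF$ with $P_c$ the orthogonal projection onto $B_c^{\alpha}$ and using the concentration ratio $\|f\|^2_{L^2(0,1)}/\|f\|^2_{L^2(0,\infty)}$, lower-bounded via $\|P_cF\|^2_{L^2(0,1)}\geq 1-3\|\widetilde{P}_cF\|^2_{L^2(0,\infty)}-2\|F\|^2_{L^2(1,\infty)}$. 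The paper's side buys exactly the absence of your ``delicate part'': since $\|P_cF\|_{L^2(0,\infty)}\leq \|F\|=1$, there is no denominator and no Gram matrix to control, and with both tail terms bounded by $X:=\frac{4}{3}c^{\alpha}\frac{c^{2n}}{n!}e^{-c}$ the constant appears immediately as $3X+2X=5X=\frac{20}{3}c^{\alpha}\frac{c^{2n}}{n!}e^{-c}$ (though the paper is itself silent on $\dim V_n(c)=n+1$, the mirror image of your independence worry). Your side buys admissibility for free (the test functions already lie in $L^2(0,1)$, no projection operator needed), and your bookkeeping does close with the same constant: the triangle inequality gives $\left(\int_c^{\infty}|\mathcal{H}^{\alpha}f|^2\right)^{1/2}\leq \left(\int_c^{\infty}|\mathcal{H}^{\alpha}F|^2\right)^{1/2}+\|F\chi_{(1,\infty)}\|\leq 2\sqrt{X}$ while $\|f\|^2\geq 1-X$, so the quotient is at least $1-\frac{4X}{1-X}$; since the claimed inequality is vacuous unless $5X<1$, one may assume $X\leq 1/5$, whence $\frac{4X}{1-X}\leq 5X$, exactly the paper's $\frac{20}{3}$. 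Finally, your incomplete-Gamma step $\int_c^{\infty}u^{2k+\alpha}e^{-u}du\leq 2\,c^{2k+\alpha}e^{-c}$ is legitimate in your range (there $2k+\alpha<c/2$, so integration by parts gives the factor $(1-\frac{2k+\alpha}{c})^{-1}\leq 2$) and is the substitution image of the Gaussian-tail bound $\int_a^{\infty}t^{\beta}e^{-t^2}dt\leq a^{\beta-1}e^{-a^2}$ that the paper uses at the corresponding point.
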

	
	\begin{proof}
	Let $(\psi_n^{\alpha})$ be the sequence of eigenfunctions of the Hankel transform defined in \eqref{eq0.3}, by the previous lemma, we have
		$$|\psi_n^{\alpha}(x)|\leq\frac{ C_n^{\alpha}}{n!}x^{2n+\alpha+1/2}e^{-x^2/2},\hspace{5mm}\forall\hspace{0.5mm} x>(4n+C_{\alpha})^{1/2}$$
		Using the following inequality valid for $\beta>1$ and $a>\sqrt{\beta-1}$
		$$\int_a^{\infty}t^{\beta}e^{-t^2}dt\leq a^{\beta-1}e^{-a^2},$$
    	we deduce that for every $k<\min\left\{\frac{a^2-2\alpha}{4},\frac{a^2-C_{\alpha}}{4}\right\},$ we have 
		\begin{equation}\label{I}
		\int_a^{\infty}|\psi_k^{\alpha}(t)|^2dt\leq (\frac{ C_k^{\alpha}}{(k)!})^2\int_a^{\infty}t^{4k+2\alpha+1}e^{-t^2}dt\leq ( \frac{C_k^{\alpha}}{k!})^2 a^{4k+2\alpha}e^{-a^2}.
		\end{equation}
		Hence, by using the fact that $\int_0^{\infty}|\psi_k^{\alpha}(t)|^2dt=1$, one gets
		\begin{equation}
		\int_0^a|\psi_k^{\alpha}(t)|^2dt\geq 1- \left( \frac{C_k^{\alpha}}{k!}\right)^2 a^{4k+2\alpha}e^{-a^2}.
		\end{equation}
		
		Next, we define $\psi_{n,c}^{\alpha}(t)=c^{1/4}\psi_n^{\alpha}(\sqrt{c} t)$, this last family is also an orthonormal basis of $L^2(0,\infty)$. Let $V_n(c)=\mbox{Span}\displaystyle\{P_c\psi_{0,c}^{\alpha},...,P_c\psi_{n,c}^{\alpha}\}$ with $P_c$ is the orthonormal projection on $B_c^{\alpha}$ defined by
		$$B_c^{\alpha}=\left\{f\in L^2(0,\infty),\mbox{Support}\left(\mathcal{H}^{\alpha}(f)\right)\subset[0,c]\right\}$$
		 From min-max theorem, we know that 
		\begin{eqnarray*}
			\lambda_n^{\alpha}(c)&\geq& \mbox{inf}\left\{\frac{\norm f_{L^2[0,1]}^2}{\norm f_{L^2(0,\infty)}^2}; f\in V_n(c)\right\}\\
			&\geq& \mbox{inf}\left\{\frac{\norm f_{L^2[0,1]}^2}{\norm f_{L^2(0,\infty)}^2}; f=\displaystyle\sum_{j=0}^n\gamma_jP_c\psi_{j,c}^{\alpha}, \sum_{j=0}^n|\gamma_j|^2=1\right\}
		\end{eqnarray*}
		
		Let $F=\displaystyle\sum_{j=0}^n\gamma_j\psi_{j,c}^{\alpha}$, $f=P_cF$ and $\widetilde{P}_c=I-P_c$, then we have
		\begin{eqnarray*}
			\norm f_{L^2[0,1]}^2&=&\left\|P_cF\right\|_{L^2[0,1]}^2=\left\|F-\widetilde{P}_cF\right\|_{L^2[0,1]}^2\\
			&\geq&\left\|F-\widetilde{P}_cF\right\|_{L^2(0,\infty)}^2-\left\|F-\widetilde{P}_cF\right\|_{L^2(1,\infty)}^2\\
			&\geq&\left\|F\right\|_{L^2(0,\infty)}^2-\left\|\widetilde{P}_cF\right\|_{L^2(0,\infty)}^2-\left\|F-\widetilde{P}_cF\right\|_{L^2(1,\infty)}^2\\
			&\geq&1-3\left\|\widetilde{P}_cF\right\|_{L^2(0,\infty)}^2-2\left\|F\right\|_{L^2(1,\infty)}^2.
		\end{eqnarray*}
		We estimate the two errors terms $\left\|\widetilde{P}_cF\right\|_{L^2(0,\infty)}^2$ and $\left\|F\right\|_{L^2(1,\infty)}^2$ and we will start with the last one.\\
		Using the Cauchy-Schwarz inequality and the fact that $\sum_{j=0}^n|\gamma_j|^2=1$, we have
		\begin{eqnarray*}
			\left\|F\right\|_{L^2(1,\infty)}^2&=&\int_1^{\infty}\left|\displaystyle\sum_{j=0}^n\gamma_j\psi_{j,c}^{\alpha}(t)\right|^2dt
			=\int_{\sqrt c}^{\infty}\left|\displaystyle\sum_{j=0}^n\gamma_j\psi_{j}^{\alpha}(t)\right|^2dt
			\leq\displaystyle\sum_{j=0}^n\int_{\sqrt c}^{\infty}\left|\psi_{j}^{\alpha}(t)\right|^2dt.
		\end{eqnarray*}
		Using \eqref{I}, one gets
		$$\left\|F\right\|_{L^2(1,\infty)}^2\leq  \displaystyle\sum_{j=0}^n(\frac{C_j^{\alpha}}{j!})^2 c^{2j+\alpha}e^{-c}\leq\displaystyle\sum_{j=0}^n\frac{ c^{2j+\alpha}}{j!}e^{-c}$$
		
		Since $n<\frac{c-C_{\alpha}}{4}<\frac{c}{4}$ and $c>C_{\alpha}\geq 1$, then we have
		$$\displaystyle\sum_{j=0}^n\frac{ c^{2j}}{j!}\leq\frac{c^{2n}}{n!}\sum_{j=0}^n\left(\frac{n}{c^2}\right)^j\leq
		4/3\frac{c^{2n}}{n!}.$$
		Then, we obtain
		\begin{equation}\label{0.3}
		\left\|F\right\|_{L^2(1,\infty)}^2\leq (4/3)c^{\alpha}\frac{c^{2n}}{n!}e^{-c}.
		\end{equation}
		Let's give now an upper bound of $\left\|\widetilde{P}_cF\right\|_{L^2(0,\infty)}^2.$
		As we have $$\left(B_c^{\alpha}\right)^{\bot}=\left\{f\in L^2(0,\infty), \mbox{Support}\left(\mathcal{H}_{\alpha}(f)\right)\subset (c,\infty)\right\}$$
		then we have
		$$\left\|\widetilde{P}_cF\right\|_{L^2(0,\infty)}^2=\left\|\mathcal{H}^{\alpha}(\widetilde{P}_cF)\right\|_{L^2(0,\infty)}^2=\int_c^{\infty}\left|\mathcal{H}^{\alpha}(F)(x)\right|^2dx.$$
		From \eqref{eq0.3}, we have
		$$\mathcal{H}^{\alpha}(F)(x)=\displaystyle\sum_{j=0}^n\gamma_j\mathcal{H}^{\alpha}(\psi_{j,c}^{\alpha})(x)=\displaystyle\sum_{j=0}^nc^{-1/4}(-1)^j\gamma_j\psi_{j}^{\alpha}(c^{-1/2}x)$$
		By Cauchy-Schwars inequality, the fact that $\sum_{j=0}^n|\gamma_j|^2=1$ and the same techniques used for $\left\|F\right\|_{L^2(1,\infty)}^2$, we have
		\begin{eqnarray*}
			\left\|\mathcal{H}^{\alpha}(\widetilde{P}_cF)\right\|_{L^2(0,\infty)}^2&=&\int_c^{\infty}\left|\displaystyle\sum_{j=0}^nc^{-1/4}(-1)^j\gamma_j\psi_{j}^{\alpha}(c^{-1/2}x)\right|^2dx\\&=&\int_{\sqrt c}^{\infty}\left|\displaystyle\sum_{j=0}^n(-1)^j\gamma_j\psi_{j}^{\alpha}(x)\right|^2dx\\&\leq&\displaystyle\sum_{j=0}^n\int_{\sqrt c}^{\infty}\left|\psi_{j}^{\alpha}(x)\right|^2dx\leq (4/3)c^{\alpha}\frac{c^{2n}}{n!}e^{-c}.
		\end{eqnarray*}
		Hence, we have
		\begin{equation}\label{0.4}
		\left\|\widetilde{P}_cF\right\|_{L^2(0,\infty)}^2=\left\|\mathcal{H}^{\alpha}(\widetilde{P}_cF)\right\|_{L^2(0,\infty)}^2\leq (4/3)c^{\alpha}\frac{c^{2n}}{n!}e^{-c}.
		\end{equation}
		
		Finally, as $\norm f_{L^2(0,\infty)}\leq1$ and by combining \eqref{0.3},\eqref{0.4}, we get our result.
		
	\end{proof}

Numerical evidences indicate that a plunge region for the spectrum $\{\lambda_{n}^{\alpha}(c), n\geq 0\}$ occur around the value $\frac{c}{\pi}-\frac{\alpha}{2}$. By using the non-asymptotic decay rate for the classical $\lambda_{n}(c)$, given by \cite{BJK}, we have, for every $c>22,$ $\eta\geq 0.069$ and $\frac{2c}{\pi}+\ln(c)+6\leq n \leq c,$
\begin{equation}\label{*}
\lambda_{n}(c)\leq \frac{1}{2}\mbox{exp}\left(-\frac{\eta(n-\frac{2c}{\pi})}{\ln(c)+5}\right).
\end{equation}
Also, from \cite{Karoui-Boulsane}, we have, for every $\alpha\geq\alpha'>-\frac{1}{2},$  $\lambda_{n}^{\alpha}(c)\leq\lambda_{n}^{\alpha'}(c).$ In particular, for every $\alpha\geq\frac{1}{2},$ we have
\begin{equation}\label{**}
\lambda_{n}^{\alpha}(c)\leq\lambda_{n}^{\frac{1}{2}}(c)=\lambda_{2n+1}(c).
\end{equation}
By using \eqref{*} and \eqref{**}, we conclude that
	For every $\alpha\geq 1/2$, $c>22$, $\eta\geq 0.069$ and $\frac{c}{\pi}+\frac{\ln(c)}{2}+\frac{5}{2}\leq n \leq \frac{c}{2}-\frac{1}{2},$ we have
	\begin{equation}
\lambda_{n}^{\alpha}(c)\leq	\frac{1}{2}\mbox{exp}\left(-\frac{\eta(2n+1-\frac{2c}{\pi})}{\ln(c)+5}\right).
	\end{equation}
The following theorem gives a super-exponential decay rate for the eigenvalues $\lambda_{n}^{\alpha}(c).$	
	\begin{theorem}
		Let $\alpha\geq -1/2$ and $(\lambda_{n}^{\alpha}(c))$ be the infinite and countable sequence  eigenvalues of the operator $\mathcal{Q}_c^{\alpha}$.  Then there exists a constant $A$ depending only on $c$ such that for every $n> \frac{ec}{4},$ we have
		$$\lambda_{n}^{\alpha}(c)\leq A\left(\frac{ec}{4n+2\alpha+5}\right)^{2n+\alpha+1}.$$
	\end{theorem}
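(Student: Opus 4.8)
The plan is to bound $|\mu_n^\alpha(c)|$ by the $n$-th approximation number of $\mathcal H_c^\alpha$, using the Taylor expansion of $J_\alpha$ to produce an explicit finite-rank approximation, and then to pass to $\lambda_n^\alpha(c)=c\,|\mu_n^\alpha(c)|^2$ and simplify by Stirling. Substituting the series $J_\alpha(z)=\sum_{k\ge 0}\frac{(-1)^k}{k!\,\Gamma(k+\alpha+1)}(z/2)^{2k+\alpha}$ into the kernel $\sqrt{cxy}\,J_\alpha(cxy)$ writes $\mathcal H_c^\alpha=\sum_{k\ge 0}A_k$ on $L^2(0,1)$, where $A_k$ is the rank-one operator $A_kf=b_k\langle f,\phi_k\rangle\phi_k$ with $\phi_k(x)=x^{2k+\alpha+1/2}$ and $b_k=\frac{(-1)^k\sqrt c\,(c/2)^{2k+\alpha}}{k!\,\Gamma(k+\alpha+1)}$. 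Since $\|\phi_k\|_{L^2(0,1)}^2=\frac{1}{4k+2\alpha+2}$, each summand has operator norm $\|A_k\|=\frac{\sqrt c\,(c/2)^{2k+\alpha}}{k!\,\Gamma(k+\alpha+1)(4k+2\alpha+2)}$.

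Because $\mathcal H_c^\alpha$ is self-adjoint, its singular values are the numbers $|\mu_k^\alpha(c)|$ taken in decreasing order, so the best rank-$n$ approximation theorem gives $|\mu_n^\alpha(c)|\le\|\mathcal H_c^\alpha-R\|$ for every operator $R$ of rank at most $n$. Taking $R=R_n:=\sum_{k=0}^{n-1}A_k$, which has rank at most $n$, yields $|\mu_n^\alpha(c)|\le\sum_{k\ge n}\|A_k\|$. I would then check that this tail is geometric: the ratio of consecutive terms is $\frac{(c/2)^2}{(k+1)(k+\alpha+1)}\cdot\frac{4k+2\alpha+2}{4k+2\alpha+6}$, and for $k\ge n>\frac{ec}{4}$ and $\alpha\ge-\frac12$ one has $(k+1)(k+\alpha+1)>n^2>\frac{e^2c^2}{16}$, so the ratio stays below $\frac{4}{e^2}<1$. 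Summing the geometric series and using $\lambda_n^\alpha(c)=c\,|\mu_n^\alpha(c)|^2$ produces
\begin{equation*}
\lambda_n^\alpha(c)\le\Big(\frac{1}{1-4/e^2}\Big)^2\,\frac{c^2\,(c/2)^{4n+2\alpha}}{\big(n!\,\Gamma(n+\alpha+1)\big)^2\,(4n+2\alpha+2)^2}.
\end{equation*}

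It remains to recast the right-hand side in the announced form. Here I would apply Stirling's lower bound to $n!\,\Gamma(n+\alpha+1)$ to generate the power $(4n+2\alpha+5)^{-(2n+\alpha+1)}$ together with the factor $(ec)^{2n+\alpha+1}$, absorbing all remaining polynomial and $n$-independent data into a constant $A$ depending only on $c$. The delicate point, and the main obstacle, is that no termwise factorial inequality delivers the clean base $\frac{ec}{4n+2\alpha+5}$: since $n!\,\Gamma(n+\alpha+1)\sim(n/e)^{2n}n^\alpha$ is smaller than $(2n/e)^{2n}$ by a factor of order $2^{2n}$, the estimate just derived is in fact far stronger than the claim for $n\gg c$, yet it is weaker than the claim in the window $\frac{ec}{4}<n\lesssim c$. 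Hence $A$ cannot be taken to be $1$; it must dominate the quotient of the displayed bound by $\big(\frac{ec}{4n+2\alpha+5}\big)^{2n+\alpha+1}$ over all admissible $n$.

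To close the argument I would observe that this quotient behaves, up to factors bounded in $n$, like $(ec/n)^{2n}$: it therefore tends to $0$ as $n\to\infty$, is continuous in $n$, and attains its maximum near $n\approx c$, where it is of size $\sim e^{2c}$. Its supremum over $n>\frac{ec}{4}$ is thus a finite number depending only on $c$, which we take as $A$. This yields $\lambda_n^\alpha(c)\le A\big(\frac{ec}{4n+2\alpha+5}\big)^{2n+\alpha+1}$ for all $n>\frac{ec}{4}$ and establishes the stated super-exponential decay.
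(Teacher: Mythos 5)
Your proposal is correct, but it follows a genuinely different route from the paper's. The paper proves this theorem via the min--max characterization of $\lambda_n^{\alpha}(c)$ applied to the quadratic form $c\,\|\mathcal{H}_c^{\alpha}f\|^2$: it takes $S_n=\mathrm{span}\{T_0^{\alpha},\dots,T_{n-1}^{\alpha}\}$ for the orthonormal Jacobi-type basis $T_k^{\alpha}(x)=(-1)^k\sqrt{2(2k+\alpha+1)}\,x^{\alpha+1/2}P_k^{(\alpha,0)}(1-2x^2)$, uses the exact identity $\mathcal{H}_c^{\alpha}(T_k^{\alpha})(x)=\sqrt{2(2k+\alpha+1)}\,J_{2k+\alpha+1}(cx)/\sqrt{cx}$, bounds the Bessel function by its leading series term times $e^{cx}$, and concludes with Batir's inequality $\Gamma(x+1)\ge\sqrt{2e}\bigl(\frac{x+1/2}{e}\bigr)^{x+1/2}$ and a geometric series, reaching a bound of the form $\mathrm{const}\cdot e^{2c}\bigl(\frac{ec}{4n+2\alpha+3}\bigr)^{4n+2\alpha+2}$, which implies the stated estimate because $\bigl(\frac{ec}{4n+2\alpha+3}\bigr)^2\le\frac{ec}{4n+2\alpha+5}$ once $n>\frac{ec}{4}$ and $\alpha\ge-\frac{1}{2}$ (a reconciliation step the paper leaves implicit, just as you must perform one). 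You instead expand the kernel itself through the power series of $J_{\alpha}$ into rank-one monomial operators and control $|\mu_n^{\alpha}(c)|$ by approximation numbers; since your monomials $x^{2k+\alpha+1/2}$ are not orthogonal, the quadratic-form min--max is unavailable and the bound $s_n(T)\le\|T-R\|$ for $\mathrm{rank}\,R\le n$ is exactly the right substitute. Your route buys freedom from all Bessel machinery (your geometric ratio $4/e^2$ is clean and carries no $e^{2c}$ at the intermediate stage), while the paper's orthonormal basis yields a fully explicit constant; tellingly, both proofs end with $A$ of order $e^{2c}$ --- yours because, as you correctly diagnose, the derived bound is weaker than the claim precisely in the window $\frac{ec}{4}<n\lesssim c$, with quotient $\sim(ec/n)^{2n}$ peaking near $n=c$ at size $e^{2c}$, which matches the factor the paper imports by bounding $e^{2cx}\le e^{2c}$. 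One point you should tighten: the theorem requires $A$ to depend only on $c$, so your final supremum must be taken uniformly in $\alpha\ge-\frac{1}{2}$ as well as in $n$, not only over $n$ as written; this does hold --- for instance, log-convexity gives $n!\,\Gamma(n+\alpha+1)\ge\Gamma\bigl(n+\frac{\alpha}{2}+1\bigr)^2$, and with Batir's bound the quotient is dominated by a constant multiple of $\bigl(\frac{ce(2m+3)}{(m+2)^2}\bigr)^{m}$ where $m=2n+\alpha+1$, whose supremum over admissible $m$ is finite and of order $e^{2c}$ --- but the uniformity in $\alpha$ deserves an explicit line.
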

	\begin{proof}
		Let $(T_k^{\alpha})$ be the family of the functions of $L^2[0,1]$ defined by $$T_k^{\alpha}(x)=(-1)^k\sqrt{2(2k+\alpha+1)}x^{\alpha+1/2}P_k^{(\alpha,0)}(1-2x^2)\hspace{5mm} k\geq 0,$$ where $P_k^{(\alpha,0)}$ is the Jacobi polynomials of order $(\alpha,0)$. This family is an orthonormal basis of $L^2[0,1]$.\\	
		By the min-max theorem, we have
		\begin{eqnarray*}
		\lambda_{n}^{\alpha}(c)&=&\displaystyle\min_{\{S_n\}}\max_{\left\{f\in S_n^{\bot},||f||_{L^2[0,1]}=1\right\}}<\mathcal{Q}_c^{\alpha}(f),f>_{L^2[0,1]}\\&=&c\displaystyle\min_{\{S_n\}}\max_{\left\{f\in S_n^{\bot},||f||_{L^2[0,1]}=1\right\}}||\mathcal{H}_c^{\alpha}(f)||^2_{L^2[0,1]},
		\end{eqnarray*}
		where $S_n$ is a subspace of $L^2[0,1]$ of dimension n.\\
		Let $S_n=span\left\{T_0^{\alpha},T_1^{\alpha},...,T_{n-1}^{\alpha}\right\}$, then for every $f\in S_n^{\bot}$ with $||f||_{L^2[0,1]}=1$, we have
		$$f(x)=\sum_{k\geq n}a_kT_k^{\alpha}(x) ~~~~~~~~\forall x\in [0,1].$$
		Hence, we have
		\begin{eqnarray*}
			||\mathcal{H}_c^{\alpha}(f)||_{L^2[0,1]}^2&=&<\mathcal{H}_c^{\alpha}(f),\mathcal{H}_c^{\alpha}(f)>_{L^2[0,1]}\\
			&=&\sum_{k\geq n, j\geq n}a_ka_j<\mathcal{H}_c^{\alpha}(T_k^{\alpha}),\mathcal{H}_c^{\alpha}(T_j^{\alpha})>_{L^2[0,1]}\\
			&=&\sum_{k\geq n}a_k^2<j_k^{\alpha},j_k^{\alpha}>_{L^2[0,1]}\\
			&=&\sum_{k\geq n}a_k^2||j_k^{\alpha}||_{L^2[0,1]}^2.
		\end{eqnarray*}
		Here, note that $j_k^{\alpha}$ is the Spherical Bessel functions defined by $j_k^{\alpha}(x)=\sqrt{2(2k+\alpha+1)}\displaystyle\frac{J_{2k+\alpha+1}(cx)}{\sqrt{cx}}$.
		\begin{eqnarray*}
			||j_k^{\alpha}||_{L^2[0,1]}^2&=& 2(2k+\alpha+1)\int_0^1\left|\frac{J_{2k+\alpha+1}(cx)}{\sqrt{cx}}\right|^2dx\\
			&\leq & 2(2k+\alpha+1)\int_0^1(\frac{cx}{2})^{4k+2\alpha+2}\frac{e^{2cx}}{(\Gamma(2k+\alpha+2))^2}dx\\
			&\leq & \frac{e^{2c}}{(\Gamma(2k+\alpha+2))^2}(\frac{c}{2})^{4k+2\alpha+2}.
		\end{eqnarray*}
		By \cite{N. B}, we have $$\sqrt{2e}\left(\frac{x+1/2}{e}\right)^{x+1/2}\leq \Gamma(x+1) \leq \sqrt{2\pi}\left(\frac{x+1/2}{e}\right)^{x+1/2}$$
		Then we obtain
		
		\begin{eqnarray*}
			||\mathcal{H}_c^{\alpha}(f)||_{L^2[0,1]}^2&\leq& \sum_{k\geq n}a_k^2\frac{e^{2c}}{(\Gamma(2k+\alpha+2))^2}(\frac{c}{2})^{4k+2\alpha+2}\\
			&\leq& \frac{e^{2c}}{2}\sum_{k\geq n}a_k^2(\frac{ec}{4k+2\alpha+3})^{4k+2\alpha+2}\\
			&\leq& M\frac{e^{2c}}{2}\sum_{k\geq n}(\frac{ec}{4n+2\alpha+3})^{4k+2\alpha+2}\\
			&\leq& M\frac{e^{2c}}{2}(\frac{ec}{4n+2\alpha+3})^{4n+2\alpha+2}.
		\end{eqnarray*}
		
		Where $M=\displaystyle\sup_{k\geq 0}a_k^2.$ 
		
	\end{proof}
The figure below indicates the different steps of the eigenvalues decay rate for $\alpha = 1/2$ and different values of c . Each curve gives us an idea about the three areas of decay as well as these distributions in the interval (0.1). 
\begin{figure}[h]
	\centering
	{\includegraphics[width=9cm,height=3.5cm]{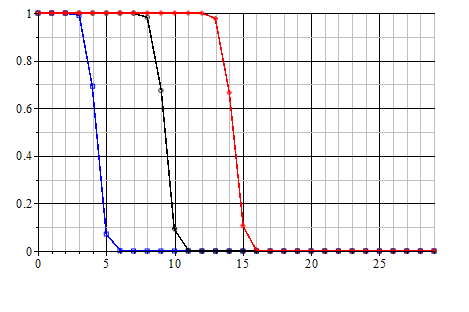}}
	\caption{(a) Graph of $\lambda_n^{(\alpha)}(c) $for $\alpha = 0.5$ and $c=$5$\pi$ (blue), $c = 10 \pi$ (black), $c = 15 \pi $ (red) } 
	
\end{figure}

\section{ The eigenvalues distribution of finite Hankel transform}
In this section, we concentrate on the distribution of $\lambda_{n}^{\alpha}(c)$ in the interval $(0,1)$ using the non-asymptotic behaviour of the trace and the Hilbert-Schmidt norm of the operator $\mathcal{Q}_c^{\alpha}.$ 
\begin{theorem}
Let $\alpha>-1/2$ and $c\geq1$, 
then we have
\begin{equation}
\mbox{trace}(\mathcal{Q}_c^{\alpha})=\frac{c}{\pi}-\frac{\alpha}{2}+\frac{\gamma_{\alpha}(c)}{c},
\end{equation}
where $|\gamma_{\alpha}(c)|\leq \gamma_{\alpha}=\gamma^1_{\alpha}+\alpha\gamma^2_{\alpha}$ and
\begin{eqnarray*}
 \gamma^1_{\alpha}&=&\left[\displaystyle\frac{(|\mu_{\alpha}|+2\alpha+1)^2-(2\alpha+3)^2+(2\alpha+1)|\mu_{\alpha}||\mu_{\alpha+1}|}{4\pi}\right]\\&+&\frac{1}{2}\left[h_{\alpha}+h_{\alpha+1}+\left(1+\sqrt{\frac{2}{\pi}}(2\alpha+1)\right)\displaystyle\left(C_{\alpha}(1+\frac{|\mu_{\alpha+1}|}{2})+C_{\alpha+1}(1+\frac{|\mu_{\alpha}|}{2})\right)\right]
\end{eqnarray*} 
$
\gamma^2_{\alpha}=\frac{\left(1+|\mu_{\alpha}|\right)\left(1+|\mu_{\alpha+1}|\right)}{\pi},$	
 $h_{\alpha}=C_{\alpha}^2+C_{\alpha}(1+\frac{|\mu_{\alpha}|}{2})$ , $\mu_{\alpha}=\alpha^2-1/4$ and $C_{\alpha}$ is given by \eqref{BoundJ_2}.
\end{theorem}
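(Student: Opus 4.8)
The plan is to compute the trace as the integral of the diagonal of the kernel,
\[
\mathrm{trace}(\mathcal{Q}_c^{\alpha})=\int_0^1 K_c^{\alpha}(x,x)\,dx=\int_0^1 cG_{\alpha}(cx,cx)\,dx=\int_0^c G_{\alpha}(t,t)\,dt,
\]
after the substitution $t=cx$. First I would simplify the diagonal value. Starting from the $x=y$ branch of $G_{\alpha}$ and using the recurrences $(tJ_{\alpha+1})'=tJ_{\alpha}-\alpha J_{\alpha+1}$ and $J_{\alpha}'=-J_{\alpha+1}+\frac{\alpha}{t}J_{\alpha}$, one obtains the clean expression
\[
G_{\alpha}(t,t)=\frac{t}{2}\left(J_{\alpha}^2(t)+J_{\alpha+1}^2(t)\right)-\alpha J_{\alpha}(t)J_{\alpha+1}(t),
\]
so the trace splits into a quadratic part and a cross part.

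For the quadratic part I would use Lommel's integral $\int_0^x tJ_{\nu}^2=\frac{x^2}{2}(J_{\nu}^2-J_{\nu-1}J_{\nu+1})$ together with the three-term recurrence to eliminate $J_{\alpha-1}$ and $J_{\alpha+2}$; this yields the exact boundary expression
\[
\int_0^c\frac{t}{2}\left(J_{\alpha}^2+J_{\alpha+1}^2\right)dt=\frac{c^2}{2}\left(J_{\alpha}^2(c)+J_{\alpha+1}^2(c)\right)-\frac{(2\alpha+1)c}{2}J_{\alpha}(c)J_{\alpha+1}(c),
\]
the contribution at $t=0$ vanishing because $t^2J_{\alpha}^2(t)\to0$. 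For the cross part I would invoke the standard value $\int_0^{\infty}J_{\alpha}(t)J_{\alpha+1}(t)\,dt=\tfrac12$ (itself a consequence of $\frac{d}{dt}J_{\nu}^2=J_{\nu-1}J_{\nu}-J_{\nu}J_{\nu+1}$), writing $\int_0^c\alpha J_{\alpha}J_{\alpha+1}\,dt=\frac{\alpha}{2}-\alpha\int_c^{\infty}J_{\alpha}J_{\alpha+1}\,dt$. This already isolates the constant $-\frac{\alpha}{2}$ of the statement and reduces everything to the endpoint quantity $S(c):=\frac{c^2}{2}(J_{\alpha}^2(c)+J_{\alpha+1}^2(c))-\frac{(2\alpha+1)c}{2}J_{\alpha}(c)J_{\alpha+1}(c)$ and the tail $\alpha\int_c^{\infty}J_{\alpha}J_{\alpha+1}$.

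Next I would insert the second-order uniform expansion \eqref{BoundJ_2} at $x=c$, namely $J_{\mu}(c)=\sqrt{\frac{2}{\pi c}}\bigl(\cos\theta_{\mu}-(\mu^2-\tfrac14)\frac{\sin\theta_{\mu}}{2c}\bigr)+E_{\mu}$ with $\theta_{\mu}=c-\omega_{\mu}$ and $|E_{\mu}|\le C_{\mu}c^{-5/2}$, into $S(c)$. Using $\theta_{\alpha+1}=\theta_{\alpha}-\frac{\pi}{2}$, the leading terms of $\frac{c^2}{2}(J_{\alpha}^2+J_{\alpha+1}^2)$ collapse to $\frac{c}{\pi}(\cos^2\theta_{\alpha}+\sin^2\theta_{\alpha})=\frac{c}{\pi}$; the crucial point is that the $O(1)$ oscillatory contributions $\frac{2\alpha+1}{2\pi}\sin2\theta_{\alpha}$ produced by the quadratic part and by the cross term $\frac{(2\alpha+1)c}{2}J_{\alpha}J_{\alpha+1}$ cancel exactly, leaving $S(c)=\frac{c}{\pi}+O(1/c)$. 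The surviving pieces — the $c^{-1}$ terms carrying $\mu_{\alpha}^2,\mu_{\alpha+1}^2$, the mixed $E_{\mu}$ terms (bounded via \eqref{boundJ} and the constants $C_{\mu}$ of \eqref{BoundJ_2}), and $c^2E_{\mu}^2$ — are all genuinely $O(1/c)$, and assembling their majorants gives the bracketed part of $\gamma^1_{\alpha}$ together with $h_{\alpha},h_{\alpha+1}$.

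The main obstacle is the explicit, uniform bookkeeping of these $O(1/c)$ contributions to produce the exact constants $\gamma^1_{\alpha}$ and $\gamma^2_{\alpha}$. Two points need care. First, the $\frac{2\alpha+1}{2\pi}\sin2\theta_{\alpha}$ cancellation must be verified exactly, since otherwise an $O(1)$ oscillation would survive and the clean form $\frac{c}{\pi}-\frac{\alpha}{2}+\frac{\gamma_{\alpha}(c)}{c}$ would fail. Second, the tail $\int_c^{\infty}J_{\alpha}J_{\alpha+1}$ cannot be bounded naively, since the crude estimate $|J_{\alpha}J_{\alpha+1}|\le C_{\alpha}(O)C_{\alpha+1}(O)/t$ from \eqref{boundJ} gives a divergent $\int_c^{\infty}dt/t$; instead I would expand the product using \eqref{boundJ_1}, integrate the purely oscillatory leading term $\frac{\sin2\theta_{\alpha}}{\pi t}$ by parts to extract an honest $1/c$ factor, and bound the remainder terms by their majorants, the grouping $1+|\mu_{\alpha}|+|\mu_{\alpha+1}|+|\mu_{\alpha}||\mu_{\alpha+1}|=(1+|\mu_{\alpha}|)(1+|\mu_{\alpha+1}|)$ producing precisely $\gamma^2_{\alpha}=\frac{(1+|\mu_{\alpha}|)(1+|\mu_{\alpha+1}|)}{\pi}$. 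Collecting the $\alpha$-independent and $\alpha$-linear majorants then gives $|\gamma_{\alpha}(c)|\le\gamma^1_{\alpha}+\alpha\gamma^2_{\alpha}$, which completes the estimate.
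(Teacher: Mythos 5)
Your proposal is correct and follows essentially the same route as the paper: Mercer's theorem for the trace, reduction via Bessel recurrences and Lommel's integral to the boundary quantity $S(c)=\frac{c^2}{2}\bigl(J_{\alpha}^2(c)+J_{\alpha+1}^2(c)\bigr)-\frac{(2\alpha+1)c}{2}J_{\alpha}(c)J_{\alpha+1}(c)$ plus $-\frac{\alpha}{2}+\alpha\int_c^{\infty}J_{\alpha}J_{\alpha+1}$, then insertion of the refined expansion \eqref{BoundJ_2} with the exact cancellation of the $\frac{2\alpha+1}{2\pi}\sin 2\theta_{\alpha}$ terms, and integration by parts on the oscillatory tail term. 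The only (inessential) difference is that you simplify the diagonal $G_{\alpha}(t,t)$ pointwise and apply Lommel's formula to both $J_{\alpha}$ and $J_{\alpha+1}$, whereas the paper integrates $(xJ_{\alpha+1})'J_{\alpha}$ by parts and uses Lommel once together with the three-term recurrence for $J_{\alpha+2}$ — both yield the same $S(c)$ and the same error bookkeeping.
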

\begin{proof}
By the Mercer theorem, the trace of a kernel operator $\mathcal{H}$ is of the form
$$\mbox{trace}(\mathcal{H})=\int_{I}H(x,x)dx.$$
Then 
\begin{eqnarray*}
\mbox{trace}(\mathcal{Q}_c^{\alpha})&=&c\int_0^1G_{\alpha}(cx,cx)dx=\int_0^cG_{\alpha}(x,x)dx\\
&=&\frac{1}{2}\int_0^c(xJ_{\alpha+1}(x))'J_{\alpha}(x)dx-\frac{1}{2}\int_0^cxJ_{\alpha+1}(x)J_{\alpha}'(x)dx\\
&=&\frac{c}{2}J_{\alpha+1}(c)J_{\alpha}(c)-\int_0^cxJ_{\alpha+1}(x)J_{\alpha}'(x)dx.
\end{eqnarray*}
From \cite{Watson}, we have $xJ_{\alpha}'(x)=-xJ_{\alpha+1}(x)+\alpha J_{\alpha}(x).$ Then we obtain
\begin{equation*}
\mbox{trace}(\mathcal{Q}_c^{\alpha})=\frac{c}{2}J_{\alpha+1}(c)J_{\alpha}(c)-\alpha\int_0^cJ_{\alpha+1}(x)J_{\alpha}(x)dx+\int_0^cxJ_{\alpha+1}^2(x)dx.
\end{equation*} 
From \cite{Watson} again, we have
$$\int_0^cxJ_{\alpha+1}^2(x)dx=\frac{c^2}{2}\left(J_{\alpha+1}^2(c)-J_{\alpha}(c)J_{\alpha+2}(c)\right)$$
and
$$J_{\alpha+2}(c)=\frac{2(\alpha+1)}{c}J_{\alpha+1}(c)-J_{\alpha}(c).$$
One gets
\begin{equation*}
\mbox{trace}(\mathcal{Q}_c^{\alpha})=\frac{c^2}{2}\left(J_{\alpha+1}^2(c)+J_{\alpha}^2(c)-\frac{2\alpha+1}{c}J_{\alpha}(c)J_{\alpha+1}(c)\right)-\alpha\int_0^cJ_{\alpha+1}(x)J_{\alpha}(x)dx.
\end{equation*} 
Moreover, from \cite{NIST}, we have $\int_0^{\infty}J_{\alpha+1}(x)J_{\alpha}(x)dx=\frac{1}{2}.$ Then we obtain
\begin{equation}\label{k2}
\mbox{trace}(\mathcal{Q}_c^{\alpha})=\frac{c^2}{2}\left(J_{\alpha+1}^2(c)+J_{\alpha}^2(c)-\frac{2\alpha+1}{c}J_{\alpha}(c)J_{\alpha+1}(c)\right)-\frac{\alpha}{2}+\alpha\int_c^{\infty}J_{\alpha+1}(x)J_{\alpha}(x)dx.
\end{equation} 
By \eqref{BoundJ_2}, we have
\begin{equation}\label{eq1}
J_{\alpha}(x)=\sqrt{\frac{2}{\pi x}}r_{\alpha}(x)+\frac{\eta_{\alpha}(x)}{x^{5/2}},
\end{equation}
where $r_{\alpha}(x)=\left(\mbox{cos}(x-\omega_{\alpha})-(\alpha^2-1/4)\displaystyle\frac{\mbox{sin}(x-\omega_{\alpha})}{2x}\right)$ and $\sup_{x\geq0}|\eta_{\alpha}(x)|\leq C_{\alpha}.$\\
A straightforward computation gives us
\begin{equation}\label{eq2}
\frac{c^2}{2}\left(J_{\alpha+1}^2(c)+J_{\alpha}^2(c)-\frac{2\alpha+1}{c}J_{\alpha}(c)J_{\alpha+1}(c)\right)=\frac{c}{\pi}\left(r_{\alpha+1}^2(c)+r_{\alpha}^2(c)-\frac{2\alpha+1}{c}r_{\alpha}(c)r_{\alpha+1}(c)\right)+\frac{h_{\alpha}^{(2)}(c)}{c},
\end{equation}
where $$|h_{\alpha}^{(2)}(c)|\leq \frac{1}{2}\left[h_{\alpha}+h_{\alpha+1}+\sqrt{\frac{2}{\pi}}(2\alpha+1)\displaystyle\left(C_{\alpha}(1+\frac{|\mu_{\alpha+1}|}{2})+C_{\alpha+1}(1+\frac{|\mu_{\alpha}|}{2})\right)\right].$$
Here $h_{\alpha}=C_{\alpha}^2+C_{\alpha}(1+\frac{|\mu_{\alpha}|}{2})$ and $\mu_{\alpha}=\alpha^2-1/4.$\\
On the other hand, we have
\begin{eqnarray*}
\frac{c}{\pi}\left(r_{\alpha+1}^2(c)+r_{\alpha}^2(c)-\frac{2\alpha+1}{c}r_{\alpha}(c)r_{\alpha+1}(c)\right)&=&\frac{c}{\pi}+\frac{\mu_{\alpha}^2+2(2\alpha+1)\mu_{\alpha}-4(\alpha+1)\mbox{cos}^2(c-\omega_{\alpha})}{4\pi c}\\&+&\frac{(2\alpha+1)\mu_{\alpha}\mu_{\alpha+1}\mbox{sin}(2c-2\omega_{\alpha})}{4\pi c}.
\end{eqnarray*}

We conclude from the last equality and \eqref{eq2} that
\begin{equation}\label{k3}
\frac{c^2}{2}\left(J_{\alpha+1}^2(c)+J_{\alpha}^2(c)-\frac{2\alpha+1}{c}J_{\alpha}(c)J_{\alpha+1}(c)\right)=\frac{c}{\pi}+\frac{h_{\alpha}^{(1)}(c)+h_{\alpha}^{(2)}(c)}{c}=\frac{c}{\pi}+\frac{\gamma_{\alpha}^{(1)}(c)}{c},
\end{equation}
where $|h_{\alpha}^{(1)}(c)|\leq\displaystyle \frac{\mu_{\alpha}^2+2(2\alpha+1)|\mu_{\alpha}|+4(\alpha+1)+(2\alpha+1)|\mu_{\alpha}||\mu_{\alpha+1}|}{4\pi}.$\\
Let's proove now that $\int_c^{\infty}J_{\alpha+1}(x)J_{\alpha}(x)dx=\frac{\gamma_{\alpha}^{(2)}(c)}{c}$ where $\gamma_{\alpha}^{(2)}(c)$ is bounded. By \eqref{eq1}, we have
\begin{equation}
J_{\alpha+1}(x)J_{\alpha}(x)=\frac{2}{\pi x}r_{\alpha+1}(x)r_{\alpha}(x)+\sqrt{\frac{2}{\pi}}\frac{f_{\alpha}(x)}{x^3},
\end{equation}
where $f_{\alpha}(x)=r_{\alpha}(x)\eta_{\alpha+1}(x)+r_{\alpha+1}(x)\eta_{\alpha}(x)$ that is $|f_{\alpha}(x)|\leq C_{\alpha}(1+\frac{|\mu_{\alpha+1}|}{2})+C_{\alpha+1}(1+\frac{|\mu_{\alpha}|}{2}).$ Note that
\begin{equation}
\int_c^{\infty}J_{\alpha+1}(x)J_{\alpha}(x)dx=\frac{2}{\pi}\int_c^{\infty}\frac{r_{\alpha+1}(x)r_{\alpha}(x)}{x}dx+\sqrt{\frac{2}{\pi}}\int_c^{\infty}\frac{f_{\alpha}(x)}{x^3}dx.
\end{equation}
First, we have 
\begin{equation*}
\left|\int_c^{\infty}\frac{f_{\alpha}(x)}{x^3}dx\right|\leq \frac{C_{\alpha}(1+\frac{|\mu_{\alpha+1}|}{2})+C_{\alpha+1}(1+\frac{|\mu_{\alpha}|}{2})}{2c^2}=\frac{\gamma_{\alpha}^{(2,0)}}{c^2}.
\end{equation*}
Second, a straightforward computation gives us
\begin{equation*} 
\frac{2r_{\alpha+1}(x)r_{\alpha}(x)}{x}=\frac{\mbox{sin}(2x-2\omega_{\alpha})}{x}+\frac{\mu_{\alpha+1}\mbox{cos}^2(x-\omega_{\alpha})-\mu_{\alpha}\mbox{sin}^2(x-\omega_{\alpha})}{x^2}-\mu_{\alpha}\mu_{\alpha+1}\frac{\mbox{sin}(2x-2\omega_{\alpha})}{4x^3}.
\end{equation*}

\begin{eqnarray*}
\frac{2}{\pi}\int_c^{\infty}\frac{r_{\alpha+1}(x)r_{\alpha}(x)}{x}dx&=&\int_c^{\infty}\frac{\mbox{sin}(2x-2\omega_{\alpha})}{\pi x}dx\\&+&\int_c^{\infty}\frac{\mu_{\alpha+1}\mbox{cos}^2(x-\omega_{\alpha})-\mu_{\alpha}\mbox{sin}^2(x-\omega_{\alpha})}{\pi x^2}dx\\&-&\mu_{\alpha}\mu_{\alpha+1}\int_c^{\infty}\frac{\mbox{sin}(2x-2\omega_{\alpha})}{4\pi x^3}.
\end{eqnarray*}

\begin{equation*}
\left|\int_c^{\infty}\frac{\mbox{sin}(2x-2\omega_{\alpha})}{\pi x}dx\right|=\left|\frac{\mbox{cos}(2x-2\omega_{\alpha})}{2\pi c}-\int_c^{\infty}\frac{\mbox{cos}(2x-2\omega_{\alpha})}{2\pi x^2}dx\right|\leq\frac{1}{2\pi c}=\frac{\gamma_{\alpha}^{(2,1)}}{c}.
\end{equation*}

\begin{equation*}
\left|\int_c^{\infty}\frac{\mu_{\alpha+1}\mbox{cos}^2(x-\omega_{\alpha})-\mu_{\alpha}\mbox{sin}^2(x-\omega_{\alpha})}{\pi x^2}dx\right|\leq \frac{|\mu_{\alpha+1}|+|\mu_{\alpha}|}{\pi c}=\frac{\gamma_{\alpha}^{(2,2)}}{c}.
\end{equation*}
and
\begin{equation*}
\left|\mu_{\alpha}\mu_{\alpha+1}\int_c^{\infty}\frac{\mbox{sin}(2x-2\omega_{\alpha})}{4\pi x^3}dx\right|\leq \frac{|\mu_{\alpha+1}||\mu_{\alpha}|}{8\pi c^2}=\frac{\gamma_{\alpha}^{(2,3)}}{c^2}
\end{equation*}
Finally, we have
\begin{equation}\label{k1}
\int_c^{\infty}J_{\alpha+1}(x)J_{\alpha}(x)dx=\frac{\gamma_{\alpha}^{(2)}(c)}{c},
\end{equation}
where $|\gamma_{\alpha}^{(2)}(c)|\leq \gamma_{\alpha}^{(2,0)}+\gamma_{\alpha}^{(2,1)}+\gamma_{\alpha}^{(2,2)}+\gamma_{\alpha}^{(2,3)}=\gamma_{\alpha}^{(2)}.$
By \eqref{k1}, \eqref{k2} and \eqref{k3}, we obtain
\begin{equation}
\mbox{trace}(\mathcal{Q}_c^{\alpha})=\frac{c}{\pi}-\frac{\alpha}{2}+\frac{\gamma_{\alpha}^{(1)}(c)+\alpha\gamma_{\alpha}^{(2)}(c)}{c}.
\end{equation}
\end{proof}	
In the figure bellow, we illustrate the previous result of this work, which is given by Theorem 3. That is   $\mbox{trace}(\mathcal{Q}_c^{\alpha})-\left(\frac{c}{\pi}-\frac{\alpha}{2}\right)$ is comparable to $1/c$ with different values of $c$ and $\alpha.$

\begin{figure}[h]
	\centering
	{\includegraphics[width=14cm,height=3.5cm]{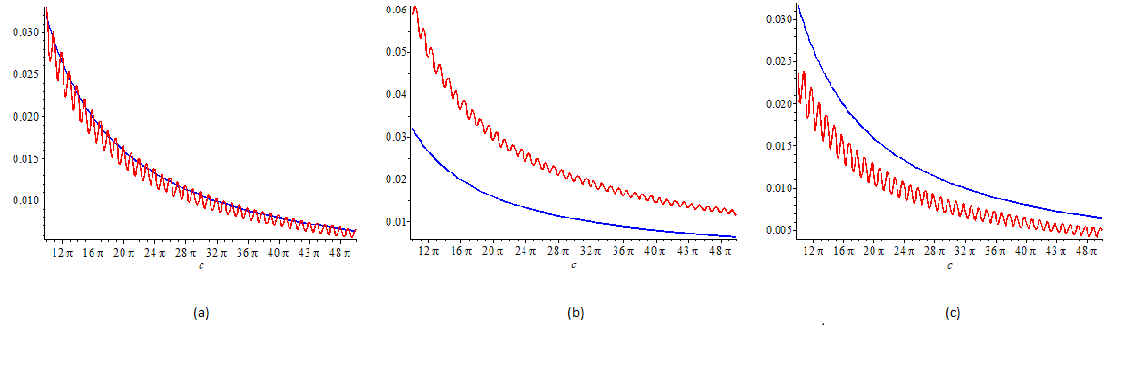}}
	\caption{(a) Graph of $\mbox{trace}(\mathcal{Q}_c^{\alpha})-(\frac{c}{\pi}-\frac{\alpha}{2}) $ (red) for $\alpha = 5/2$ (a), $\alpha = 0.55$ (b), $\alpha = 1/2$ (c) and graph of 1/c (blue)}
	
\end{figure}
 \begin{eqnarray}\label{C*}
	\mbox{Let} \hspace{1mm} C&=&\sqrt{\frac{2}{\pi}}\left[2C_{\alpha+1}(O)\left(1+\sqrt{\frac{\pi}{2}}C_{\alpha}(O)\right)+C_{\alpha}(K)\frac{(3\alpha+\frac{7}{2})C_{\alpha+2}(O)+(\alpha+\frac{3}{2})C_{\alpha}(O)}{2(\alpha+1)}\right]\\&+&\sqrt{\frac{2}{\pi}}\left[\sqrt{\frac{\pi}{2}}C_{\alpha+1}(O)\left((\frac{3}{2}+\alpha)C_{\alpha}(O)+C_{\alpha+1}(K)\right)+C_{\alpha+1}(K)+(\frac{5}{2}+\alpha)C_{\alpha+1}(O)+C_{\alpha+2}(K)\right]\nonumber.
\end{eqnarray}
\begin{theorem}
Let $\alpha>-1/2$, $c\geq1$ and $(\lambda_n^{\alpha}(c))$ be the sequence of eigenvalue of the integral operator $\mathcal{Q}_c^{\alpha}$. For every $0<\epsilon<1/2$, we have
\begin{equation}
\#\{n ; \epsilon<\lambda_n^{\alpha}(c)<1-\epsilon \}\leq \frac{16\left[\frac{\mbox{ln}(\frac{c}{\pi})}{2\pi^2}+\frac{3}{4\pi^3}+\frac{\ln(\frac{8}{3})}{\pi^2}+\frac{2c}{\pi}+1.123\right]+2C^2+4\kappa_1\mbox{ln}(2)+\frac{\kappa_2}{c^2}}{\epsilon(1-\epsilon)},
\end{equation}
where
$\kappa_1=\frac{4}{\pi^2}\left(1+(1+\sqrt{\frac{\pi}{2}}C_{\alpha}(O))^2\right)$, $\kappa_2=\frac{8C^2_{\alpha+1}(K)}{\pi^2}\left(1+(1+\sqrt{\frac{\pi}{2}}C_{\alpha}(O))^2\right)$ and $C_{\alpha}(O), C_{\alpha}(K)$ have been given by \eqref{boundJ} and \eqref{boundJ_1}.
\end{theorem}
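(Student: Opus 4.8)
The plan is to combine the exact trace formula of Theorem 3 with a lower bound for $\|\mathcal{Q}_c^\alpha\|_{HS}$ and to feed both into the elementary counting inequality already announced in the introduction. Throughout I write, as in the introduction, $\|\mathcal{Q}_c^\alpha\|_{HS}=\sum_n(\lambda_n^\alpha(c))^2=\mathrm{trace}\big((\mathcal{Q}_c^\alpha)^2\big)$. First I would record that $\mathcal{Q}_c^\alpha$ is positive with norm at most $1$, so that $0\le\lambda_n^\alpha(c)\le1$ for all $n$; hence each term $\lambda_n^\alpha(c)\big(1-\lambda_n^\alpha(c)\big)$ is non-negative, and on the plunge set it is bounded below by $\epsilon(1-\epsilon)$. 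Summing over all $n$ gives
$$\epsilon(1-\epsilon)\,\#\{n:\epsilon<\lambda_n^\alpha(c)<1-\epsilon\}\le\sum_n\lambda_n^\alpha(c)\big(1-\lambda_n^\alpha(c)\big)=\mathrm{trace}(\mathcal{Q}_c^\alpha)-\|\mathcal{Q}_c^\alpha\|_{HS},$$
so everything reduces to an upper bound for the right-hand side, i.e. to an exact value for the trace and a sharp lower bound for the Hilbert--Schmidt term.

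The trace is supplied directly by Theorem 3 as $\mathrm{trace}(\mathcal{Q}_c^\alpha)=\frac{c}{\pi}-\frac{\alpha}{2}+\frac{\gamma_\alpha(c)}{c}$. For the second term I would turn it into a double integral: since $\mathcal{Q}_c^\alpha$ has the symmetric kernel $K_c^\alpha(x,y)=c\,G_\alpha(cx,cy)$, one has $\|\mathcal{Q}_c^\alpha\|_{HS}=\int_0^1\!\int_0^1 K_c^\alpha(x,y)^2\,dx\,dy$, and the rescaling $u=cx,\ v=cy$ converts this into $\int_0^c\!\int_0^c G_\alpha(u,v)^2\,du\,dv$. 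The whole difficulty is now to bound this last integral from below.

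This is the main obstacle. I would insert the two-term asymptotic expansion of Proposition 1 (together with the Krasikov/Olenko bounds \eqref{boundJ}, \eqref{boundJ_1}) into the definition of $G_\alpha$. Writing $J_\alpha(x)=\sqrt{2/(\pi x)}\,r_\alpha(x)+\eta_\alpha(x)/x^{5/2}$ with $\sup|\eta_\alpha|\le C_\alpha$ and using $\omega_{\alpha+1}=\omega_\alpha+\pi/2$, the off-diagonal expression collapses, after a product-to-sum simplification, to the reproducing-kernel form
$$G_\alpha(u,v)=\frac{1}{\pi}\left(\frac{\sin(u-v)}{u-v}+\frac{\sin(u+v-2\omega_\alpha)}{u+v}\right)+R_\alpha(u,v),$$
where $R_\alpha$ collects the $O(x^{-1})$ Bessel corrections and the $\eta_\alpha$ remainders. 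The leading sinc term is decisive: using $\int_0^c\!\int_0^c f(u-v)\,du\,dv=\int_{-c}^c(c-|s|)f(s)\,ds$ and $\int_0^\infty\frac{\sin^2 s}{s^2}\,ds=\frac{\pi}{2}$, I would evaluate
$$\int_0^c\!\int_0^c\frac{\sin^2(u-v)}{\pi^2(u-v)^2}\,du\,dv=\frac{c}{\pi}-\frac{\ln(2c)}{\pi^2}+O\!\left(\tfrac1c\right),$$
so that its leading part cancels exactly the $\frac{c}{\pi}$ of the trace and only a logarithm survives; this cancellation is precisely what forces the final count to be of order $\ln c$. The genuinely delicate points are (i) the region near the origin, where the asymptotics fail and $G_\alpha$ must instead be bounded directly through $C_\alpha(O)$; (ii) controlling $\|R_\alpha\|_{L^2([0,c]^2)}$ together with the cross term $2\langle G_{\mathrm{lead}},R_\alpha\rangle$ via $\|G_{\mathrm{lead}}+R_\alpha\|^2\ge\|G_{\mathrm{lead}}\|^2-2\|G_{\mathrm{lead}}\|\,\|R_\alpha\|$; and (iii) the $\frac{\sin(u+v-2\omega_\alpha)}{u+v}$ term, which contributes $O(\ln c)$ and whose near-origin behaviour again needs the direct bound. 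These three sources are exactly what the constant $C$ of \eqref{C*} (built from $C_\alpha(O),C_\alpha(K)$) and the constants $\kappa_1,\kappa_2$ are designed to absorb.

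Finally, subtracting the resulting explicit lower bound for $\int_0^c\!\int_0^c G_\alpha(u,v)^2\,du\,dv$ from the trace of Theorem 3 and dividing by $\epsilon(1-\epsilon)$ produces the announced estimate. I expect the hardest part to be purely the bookkeeping: keeping every constant explicit rather than hiding the corrections in $O(\cdot)$ symbols, and splitting the square $[0,c]^2$ into the origin region, the bulk, and the near-diagonal/near-boundary strips. The only conceptual obstacle is the uniform $L^2$ control of the Bessel remainder $R_\alpha$ and of the origin region, and this is precisely what Proposition 1 and the bounds \eqref{boundJ}, \eqref{boundJ_1} are tailored to provide.
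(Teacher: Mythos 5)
Your skeleton --- the counting inequality $\#\{n:\epsilon<\lambda_n^{\alpha}(c)<1-\epsilon\}\le\big(\mathrm{trace}(\mathcal{Q}_c^{\alpha})-\|\mathcal{Q}_c^{\alpha}\|_{HS}\big)/\epsilon(1-\epsilon)$, Theorem 3 for the trace, and the product-to-sum collapse of the leading part of $G_{\alpha}$ into $\frac{1}{\pi}\left(\frac{\sin(u-v)}{u-v}+\frac{\sin(u+v-2\omega_{\alpha})}{u+v}\right)$ --- agrees with the paper, but you missed the paper's key idea, and the step you substitute for it would fail with the tools you cite. The paper never lower-bounds $\int_0^c\int_0^c G_{\alpha}^2$ and never needs the cancellation of the two $\frac{c}{\pi}$ terms on which your whole plan rests. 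Instead it exploits that $\mathcal{B}_c^{\alpha}$ is a reproducing kernel Hilbert space, so that $K_c^{\alpha}(x,x)=\int_0^{\infty}\left(K_c^{\alpha}(x,y)\right)^2dy$; integrating in $x$ turns the difference into the \emph{exact} identity $\mathrm{trace}(\mathcal{Q}_c^{\alpha})-\|\mathcal{Q}_c^{\alpha}\|_{HS}=\int_0^c\int_c^{\infty}\left(G_{\alpha}(x,y)\right)^2dy\,dx$. The problem is thereby reduced to an \emph{upper} bound on a single tail integral, split as $\mathcal{I}_1(c)=\int_0^c\int_c^{2c}$ and $\mathcal{I}_2(c)=\int_0^c\int_{2c}^{\infty}$, each handled with the one-term Krasikov expansion \eqref{boundJ_1}: $\mathcal{I}_2$ yields $4\kappa_1\ln 2+\kappa_2/c^2$ via the elementary integrals of $(y-x)^{-2}$ and $(y^2-x^2)^{-2}$, while $\mathcal{I}_1$ yields the sinc and logarithmic terms and the constant $C$ of \eqref{C*}. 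This is structurally much lighter than your route: only upper bounds of explicitly integrable majorants are needed, no two-sided control, no cancellation bookkeeping.

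The concrete gap in your version is your step (ii), the control of $\|R_{\alpha}\|_{L^2([0,c]^2)}$. Your domain $[0,c]^2$ contains the entire diagonal, and $R_{\alpha}$ consists of terms such as $\frac{2}{\pi}\Big[\frac{\rho_{\alpha}(y)y^{-1}x\sin(x-\omega_{\alpha})-\rho_{\alpha}(x)x^{-1}y\sin(y-\omega_{\alpha})}{x^2-y^2}\Big]$. With only the sup-norm information $|\rho_{\alpha}|\le\sqrt{\pi/2}\,C_{\alpha}(K)$ from \eqref{boundJ_1} (and likewise for $\eta_{\alpha}$ in \eqref{BoundJ_2}), the best pointwise bound near the diagonal is of order $1/\big(x|x-y|\big)$, which is not square-integrable across $u=v$. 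The numerator does vanish at $x=y$, but quantifying that vanishing requires Lipschitz or derivative bounds on the Bessel remainders $\rho_{\alpha},\eta_{\alpha}$, which neither \eqref{boundJ}, \eqref{boundJ_1} nor Proposition 1 supply; so the toolkit you invoke cannot close this step, whereas in the paper's decomposition the domains $[0,c]\times[c,2c]$ and $[0,c]\times[2c,\infty)$ stay away from the diagonal and sup-norm bounds suffice. A secondary defect: your evaluation of the sinc double integral is asymptotic, and the error you call $O(1/c)$ is in fact $O(1)$ (one finds $\frac{c}{\pi}-\frac{\ln(2c)+\gamma+1}{\pi^2}+O(1/c)$, with Euler's constant and the tail $\frac{2c}{\pi^2}\int_c^{\infty}\sin^2 s/s^2\,ds$ surviving); that is harmless for an $O(\ln c)$ asymptotic statement, but the theorem demands the fully explicit constants $C$, $\kappa_1$, $\kappa_2$, which your $O(\cdot)$ bookkeeping cannot produce without redoing the estimate in the paper's non-asymptotic form.
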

\begin{proof}
The Hilbert-Schmidt norm of $\mathcal{Q}_c^{\alpha}$ is given by
\begin{eqnarray}
||\mathcal{Q}_c^{\alpha}||_{HS}&=&\int_0^1\int_0^1\left(K_c^{\alpha}(x,y)\right)^2dxdy\\
&=&\int_0^1\int_0^{\infty}\left(K_c^{\alpha}(x,y)\right)^2dxdy-\int_0^1\int_1^{\infty}\left(K_c^{\alpha}(x,y)\right)^2dxdy
\end{eqnarray}
The Palley-Wienner space $$\mathcal{B}_c^{\alpha}=\{ f\in L^2(0,\infty);\,\ \mbox{Support} \mathcal H^{\alpha}(f)\subset [0,c]\}$$ is a reproducing kernel Hilbert space,see \cite{N.Aronszajn}. Indeed, from the Hankel inversion theorem, for every $f\in\mathcal{B}_c^{\alpha},$  we have
$$f(x)=\int_0^c\sqrt{xy}J_{\alpha}(xy)\mathcal H^{\alpha}(f)(y)dy.$$
One gets by the Cauchy-Schwarz inequality and Parseval's theorem that, for all $x\geq 0$
$$|f(x)|\leq \sqrt{c}\hspace{0.6mm}C_{\alpha}(O)||\mathcal{H}^{\alpha}(f)||_{L^2(0,\infty)}=\sqrt{c}\hspace{0.6mm}C_{\alpha}(O)||f||_{L^2(0,\infty)}.$$ 
It then follows that 
\begin{equation}
K_c^{\alpha}(x,x)=\int_0^{\infty}\left(K_c^{\alpha}(x,y)\right)^2dy.
\end{equation}	
We conclude that
 \begin{eqnarray*}
||\mathcal{Q}_c^{\alpha}||_{HS}&=&\mbox{trace}(\mathcal{Q}_c^{\alpha})-\int_0^1\int_1^{\infty}\left(K_c^{\alpha}(x,y)\right)^2dydx\\&=&\mbox{trace}(\mathcal{Q}_c^{\alpha})-\int_0^c\int_c^{\infty}\left(G_{\alpha}(x,y)\right)^2dydx\\&=&\mbox{trace}(\mathcal{Q}_c^{\alpha})-\int_0^c\int_c^{2c}\left(G_{\alpha}(x,y)\right)^2dydx-\int_0^c\int_{2c}^{\infty}\left(G_{\alpha}(x,y)\right)^2dydx\\&=&\mbox{trace}(\mathcal{Q}_c^{\alpha})-\mathcal{I}_1(c)-\mathcal{I}_2(c).
\end{eqnarray*}	
One gets,
\begin{eqnarray}
\#\{n ; \epsilon<\lambda_n^{\alpha}(c)<1-\epsilon \}&\leq&\frac{\displaystyle\sum_{n=0}^{\infty}\lambda_n^{\alpha}(c)(1-\lambda_n^{\alpha}(c))}{\epsilon (1-\epsilon)}\\&=& \frac{\mbox{trace}(\mathcal{Q}_c^{\alpha})-||\mathcal{Q}_c^{\alpha}||_{HS}}{\epsilon(1-\epsilon)}=\frac{\mathcal{I}_1(c)+\mathcal{I}_2(c)}{\epsilon(1-\epsilon)}.
\end{eqnarray}

By \eqref{boundJ_1}, $J_{\alpha}(x)=\sqrt{\frac{2}{\pi x}}\left(\mbox{cos}(x-\omega_{\alpha})+\frac{\rho_{\alpha}(x)}{x}\right)$ with $\sup_{x\geq 0}\left|\rho_{\alpha}(x)\right|\leq \sqrt{\frac{\pi}{2}}C_{\alpha}(K)$.\\Then the kernel $G_{\alpha}$ has the following form
\begin{eqnarray}\label{eq3}
G_{\alpha}(x,y)&=&\frac{2}{\pi}\left[\frac{x\mbox{sin}(x-\omega_{\alpha})\mbox{cos}(y-\omega_{\alpha})-y\mbox{sin}(y-\omega_{\alpha})\mbox{cos}(x-\omega_{\alpha})}{x^2-y^2}\right]\\
&+&\frac{2}{\pi}\left[\frac{\frac{\rho_{\alpha}(y)}{y}x\mbox{sin}(x-\omega_{\alpha})-\frac{\rho_{\alpha}(x)}{x}y\mbox{sin}(y-\omega_{\alpha})}{x^2-y^2}\right]\nonumber\\&+&\frac{2}{\pi}\left[\frac{\rho_{\alpha+1}(x)\mbox{cos}(y-\omega_{\alpha})-\rho_{\alpha+1}(y)\mbox{cos}(x-\omega_{\alpha})}{x^2-y^2}\right]\nonumber\\&+&\frac{2}{\pi}\left[\frac{\rho_{\alpha+1}(x)\frac{\rho_{\alpha}(y)}{y}-\rho_{\alpha+1}(y)\frac{\rho_{\alpha}(x)}{x}}{x^2-y^2}\right]\nonumber.
\end{eqnarray}
Using the inequality
$\left(\sum_{k=1}^na_k\right)^2\leq n\sum_{k=1}^n(a_k)^2,$ we obtain
\begin{eqnarray}
\mathcal{I}_2(c)=\int_0^c\int_{2c}^{\infty}\left(G_{\alpha}(x,y)\right)^2dydx&\leq& 4\kappa_1\int_0^c\int_{2c}^{\infty}\frac{dydx}{(y-x)^2}+4\kappa_2\int_0^c\int_{2c}^{\infty}\frac{dydx}{(y^2-x^2)^2}\\&\leq&4\kappa_1\mbox{ln}(2)+\frac{\kappa_2}{c^2},
\end{eqnarray}
with
$\kappa_1=\frac{4}{\pi^2}\left(1+(1+\sqrt{\frac{\pi}{2}}C_{\alpha}(O))^2\right)$ and
$\kappa_2=\frac{8C^2_{\alpha+1}(K)}{\pi^2}\left(1+(1+\sqrt{\frac{\pi}{2}}C_{\alpha}(O))^2\right).$\\
For the last integral $\mathcal{I}_1(c)$, using \eqref{eq3} we have
$$G_{\alpha}(x,y)=L^1_{\alpha}(x,y)+L^2_{\alpha}(x,y),$$
where $$L^1_{\alpha}(x,y)=\frac{2}{\pi}\left[\frac{x\mbox{sin}(x-\omega_{\alpha})\mbox{cos}(y-\omega_{\alpha})-y\mbox{sin}(y-\omega_{\alpha})\mbox{cos}(x-\omega_{\alpha})}{x^2-y^2}\right]$$
and $L^2_{\alpha}(x,y)=\frac{\gamma_{\alpha}(x,y)}{c}$, where $|\gamma_{\alpha}(x,y)|\leq C$ and $C$ as given by \eqref{C*}.\\ 
Note that
\begin{eqnarray*}
\mathcal{I}_1(c)&=&\int_0^c\int_c^{2c}\left(L^1_{\alpha}(x,y)+L^2_{\alpha}(x,y)\right)^2dydx\\&\leq& 2c^2\int_0^1\int_1^{2}\left(L^1_{\alpha}(cx,cy)\right)^2dydx+2C^2\\&\leq& \frac{8}{\pi^2}\int_0^1\int_1^{2}\left[\frac{cx\mbox{sinc}(\frac{c}{\pi}(x-y))}{x+y}+\frac{\mbox{sin}(cy-\omega_{\alpha})\mbox{cos}(cx-\omega_{\alpha})}{x+y}\right]^2dydx+2C^2\\&\leq&\frac{16c^2}{\pi^2}\int_0^1\int_1^{2}\left[\mbox{sinc}(\frac{c}{\pi}(x-y))\right]^2dydx+\frac{16\mbox{ln}(4/3)}{\pi^2}+2C^2.
\end{eqnarray*}
Using the fact that $[0,1]\times[1,2]=D_1\cup D_2\cup D_3\cup D_4,$ where
$$D_1=\{(x,y),\hspace{2mm} 1/2<x<1,\hspace{2mm} -x+2<y<x+1\}$$
$$D_2=\{(x,y),\hspace{2mm} 0<x<1/2,\hspace{2mm} x+1<y<-x+2\}$$
$$D_3=\{(x,y),\hspace{2mm} 0<x<1,\hspace{2mm} y<x+1,\hspace{2mm} y<-x+2\}$$
$$D_4=\{(x,y),\hspace{2mm} 0<x<1,\hspace{2mm} y>x+1,\hspace{2mm} y>-x+2\}$$
and the following change of variables $u=x-y, t=y$ with the techniques of \cite{BK} , one gets
\begin{equation}
\mathcal{I}_1(c)\leq 16\left[\frac{\mbox{ln}(\frac{c}{\pi})}{2\pi^2}+\frac{3}{4\pi^3}+\frac{\ln(\frac{8}{3})}{\pi^2}+\frac{2c}{\pi}+1.123\right]+2C^2.
\end{equation}
\end{proof}
\section{Asymptotic Expansions for CPSWFs}
In this section, we give a brief description of the computation and the decay rate of the series expansion coefficients 
of the eigenfunctions $\varphi_{n,c}^{\alpha}$ in a generalized Laguerre functions basis of $L^2(0,\infty)$ defined in \eqref{eq0.3},
 that is for all $x\geq 0$, we have 
\begin{equation}\label{eq4}
\varphi_{n,c}^{\alpha}(x)=\sum_{k=0}^{\infty}\beta^n_k(c)\psi_{k,\alpha}^{a}(x)
\end{equation}
where $\beta^n_k(c)=\int_0^{\infty}\varphi_{n,c}^{\alpha}(x)\psi_{k,\alpha}^a(x)dx.$
\begin{lemma}
	Let $a>0$ and $\mathcal{D}_c^{\alpha}$ be the Sturm-Liouville differential operator defined in \eqref{differ_operator1}. Then for every $n\geq 0$, we have
	\begin{equation}
\mathcal{D}_c^{\alpha}(\psi_{n,\alpha}^{a})=d_{n}^{n-2}(a)\psi_{n-2,\alpha}^{a}+d_{n}^{n-1}(a)\psi_{n-1,\alpha}^{a}+d_{n}^{n}(a)\psi_{n,\alpha}^{a}+d_{n}^{n+1}(a)\psi_{n+1,\alpha}^{a}+d_{n}^{n+2}(a)\psi_{n+2,\alpha}^{a},
\end{equation}
where
$$d_{n}^{n-2}(a)=\sqrt{(n-1)n(n+\alpha-1)(n+\alpha)}$$
$$d_{n}^{n-1}(a)=-\sqrt{n(n+\alpha)}\left(\frac{c^2}{a^2}-a^2+4(2n+\alpha+1)\right)$$
$$d_{n}^{n}(a)=\alpha^2-\frac{5}{4}+(2n+\alpha+1)\left(\frac{c^2}{a^2}+a^2+3(2n+\alpha)+2\right)$$
$$d_{n}^{n+1}(a)=-\sqrt{(n+1)(n+\alpha+1)}\left(\frac{c^2}{a^2}-a^2+4(2n+\alpha+1)\right)$$
$$d_{n}^{n+2}(a)=\sqrt{(n+1)(n+2)(n+\alpha+1)(n+\alpha+2)}$$
\end{lemma}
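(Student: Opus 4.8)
The plan is to reduce $\mathcal{D}_c^\alpha$ to a combination of multiplication operators and the first-order operator $x\frac{d}{dx}$, each of which acts on the orthonormal family $(\psi_{n,\alpha}^a)$ through a short recurrence, so that the five-band structure emerges from composing these recurrences. The first and decisive ingredient is that $\psi_{n,\alpha}^a$ is an eigenfunction of a radial harmonic-oscillator operator: starting from the Laguerre equation \eqref{diff,Lag} for $\widetilde{L}_n^\alpha$ and conjugating by the weight $\sqrt{2}a^{\alpha+1}x^{\alpha+1/2}e^{-(ax)^2/2}$ under the change of variable $t=a^2x^2$, one verifies
$$-\left(\psi_{n,\alpha}^a\right)'' + \left(a^4 x^2 + \frac{\alpha^2-1/4}{x^2}\right)\psi_{n,\alpha}^a = a^2(4n+2\alpha+2)\,\psi_{n,\alpha}^a.$$
This is what allows me to eliminate every second-derivative term: solving it for $(\psi_{n,\alpha}^a)''$ and for $x^2(\psi_{n,\alpha}^a)''$ turns those into multiples of $\psi_{n,\alpha}^a$, of $x^{-2}\psi_{n,\alpha}^a$, and of $x^4\psi_{n,\alpha}^a$.

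The second and third ingredients are the tridiagonal actions of $x^2$ and of $x\frac{d}{dx}$. Renormalising the three-term recurrence \eqref{recu.Lag} to $\widetilde{L}_n^\alpha$ gives $t\widetilde{L}_n^\alpha=-\sqrt{n(n+\alpha)}\,\widetilde{L}_{n-1}^\alpha+(2n+\alpha+1)\widetilde{L}_n^\alpha-\sqrt{(n+1)(n+\alpha+1)}\,\widetilde{L}_{n+1}^\alpha$, which (with $t=a^2x^2$) transfers verbatim to a tridiagonal rule for $a^2x^2\psi_{n,\alpha}^a$. Likewise the derivative identity $t(\widetilde{L}_n^\alpha)'=n\widetilde{L}_n^\alpha-\sqrt{n(n+\alpha)}\,\widetilde{L}_{n-1}^\alpha$, combined with the explicit derivative of the weight, yields a tridiagonal rule for $x(\psi_{n,\alpha}^a)'$.

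Then I would expand $\mathcal{D}_c^\alpha(\phi)=\phi''-x^2\phi''-2x\phi'+\left(\frac{1/4-\alpha^2}{x^2}-c^2x^2\right)\phi$, use the oscillator identity to remove $\phi''$ and $x^2\phi''$ (the two singular $x^{-2}$ contributions cancel exactly, which confirms that $\mathcal{D}_c^\alpha(\psi_{n,\alpha}^a)$ lands in $\mathrm{span}(\psi_{k,\alpha}^a)$), and substitute the tridiagonal rules for $x(\psi_{n,\alpha}^a)'$ and $x^2\psi_{n,\alpha}^a$. The only genuinely five-term contribution is $x^4\psi_{n,\alpha}^a$, obtained by applying the $x^2$-recurrence twice; the product of two consecutive Laguerre coefficients $\sqrt{(n+1)(n+\alpha+1)}\,\sqrt{(n+2)(n+\alpha+2)}$ and its downward analogue produce the $\psi_{n\pm2,\alpha}^a$ bands, giving $d_n^{n\pm2}$ at once, while the three central bands collect the contributions of the $x^4$, $x^2$, $x\frac{d}{dx}$ and eigenvalue pieces.

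The main obstacle is entirely in this last collection step: the $\psi_{n\pm1,\alpha}^a$ and the diagonal coefficients each receive several contributions (from the $a^4$, $c^2$, eigenvalue and first-derivative parts), and substantial cancellation occurs among them, so one must track the recurrence weights $\sqrt{n(n+\alpha)}$ and $2n+\alpha+1$ together with the eigenvalue $a^2(4n+2\alpha+2)$ carefully to bring each band to the stated closed form. Once the three-term and derivative recurrences are fixed and the oscillator identity is in hand, $d_n^{n\pm2}$ is immediate and the remaining three coefficients follow by collecting like terms.
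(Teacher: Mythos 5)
Your plan is, in substance, the paper's own proof. The paper likewise computes $(\psi_{n,\alpha}^{a})'$ and $(\psi_{n,\alpha}^{a})''$ and invokes the Laguerre equation \eqref{diff,Lag} to obtain exactly your oscillator identity
$$(\psi_{n,\alpha}^{a})''=\left[\frac{\alpha^2-1/4}{x^2}-2a^2(2n+\alpha+1)+a^4x^2\right]\psi_{n,\alpha}^{a},$$
(your eigenvalue $a^2(4n+2\alpha+2)$ is the same number), then uses the normalized three-term recurrence for $a^2x^2\psi_{n,\alpha}^{a}$ — applied twice for the $x^4$ band, whence $d_n^{n\pm2}$ — together with a first-derivative recurrence to dispose of the leftover $\widetilde{L}_n^{\alpha\prime}$ term. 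Your only departure is bookkeeping: you fold the derivative term into a tridiagonal rule for $x(\psi_{n,\alpha}^{a})'$ via $t(\widetilde{L}_n^{\alpha})'(t)=n\widetilde{L}_n^{\alpha}(t)-\sqrt{n(n+\alpha)}\,\widetilde{L}_{n-1}^{\alpha}(t)$, while the paper keeps the $\widetilde{L}_n^{\alpha\prime}$ remainder explicit and uses the upward identity $t(\widetilde{L}_n^{\alpha})'(t)=(t-(n+\alpha+1))\widetilde{L}_n^{\alpha}(t)+\sqrt{(n+1)(n+\alpha+1)}\,\widetilde{L}_{n+1}^{\alpha}(t)$; the two are equivalent modulo \eqref{recu.Lag}.

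Two caveats, one small and one substantive. Small: you expand $\mathcal{D}_c^{\alpha}$ literally as in \eqref{differ_operator1}, but the stated coefficients (note $d_n^n>0$) belong to the negated operator of \eqref{differ_operator2}, which is what the paper's proof actually writes; executed as planned, your computation returns the negatives of all five bands. Substantive: your final step promises that collecting terms ``brings each band to the stated closed form,'' and it will not, because the stated middle bands are incorrect. A faithful execution of your plan gives $d_n^{n\pm2}$ as stated but
$$d_n^{n-1}=-\sqrt{n(n+\alpha)}\left(\frac{c^2}{a^2}-a^2\right),\quad d_n^{n+1}=-\sqrt{(n+1)(n+\alpha+1)}\left(\frac{c^2}{a^2}-a^2\right),$$
$$d_n^{n}=(2n+\alpha+1)\left(\frac{c^2}{a^2}+a^2\right)-2n(n+\alpha+1)-\alpha-\frac{5}{4},$$
i.e.\ without the $4(2n+\alpha+1)$ term and with a different diagonal. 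You can certify this without redoing the algebra: $\mathcal{D}_c^{\alpha}$ is formally symmetric and the boundary terms vanish on the rapidly decaying $\psi_{n,\alpha}^{a}$, so its matrix in this orthonormal basis must satisfy $d_n^{n+1}=d_{n+1}^{n}$, which the printed coefficients violate by $8\sqrt{(n+1)(n+\alpha+1)}$. A spot check at $n=0$, $\alpha=1/2$, $a=1$ confirms it: there $-\mathcal{D}_c^{1/2}\psi_{0}=(x^4+(c^2-6)x^2+5)\psi_{0}$, giving diagonal entry $\frac{3}{2}c^2-\frac{1}{4}$ and $\langle -\mathcal{D}_c^{1/2}\psi_{0},\psi_{1}\rangle=-\frac{\sqrt{6}}{2}(c^2-1)$, against the stated $\frac{3}{2}c^2+\frac{23}{4}$ and $-\frac{\sqrt{6}}{2}(c^2+5)$. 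The source is a sign slip in the paper's intermediate identity \eqref{M}: the term $2a^2x^2(2n+\alpha)$ should read $-2a^2x^2(2n+\alpha+2)$, the middle term of $x^2(\psi_{n,\alpha}^{a})''$ having lost its minus sign. So your method is the right one and is the paper's method; if your collection step reproduces the lemma as printed, you have repeated the paper's error rather than completed the proof.
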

\begin{proof}
	First, we have 
	$$\dfrac{d}{dx} \psi_{n,\alpha}^{a}(x)=(\frac{\alpha+1/2}{x}-a^2x)\psi_{n,\alpha}^{a}(x)+2\sqrt{2}a^{\alpha+3}x^{\alpha+3/2}e^{-\frac{a^2x^2}{2}}\dfrac{d}{dx}\left(\widetilde{L}_n^{\alpha}\right)(a^2x^2),$$
	then
	\begin{eqnarray*}
	\dfrac{d^2}{dx^2} \psi_{n,\alpha}^{a}(x)&=&\left[\frac{\alpha^2-1/4}{x^2}-2a^2(\alpha+1/2)+a^4x^2-a^2\right]\psi_{n,\alpha}^{a}(x)\\&+&4\sqrt{2}a^{\alpha+3}x^{\alpha+1/2}e^{-\frac{a^2x^2}{2}}\left[a^2x^2\frac{d^2\widetilde{L}_n^{\alpha}}{dx^2}(a^2x^2)+(\alpha+1-a^2x^2)\frac{d\widetilde{L}_n^{\alpha}}{dx}(a^2x^2)\right].
	\end{eqnarray*}
By \eqref{diff,Lag}, we obtain
$$\dfrac{d^2}{dx^2} \psi_{n,\alpha}^{a}(x)=\left[\frac{\alpha^2-1/4}{x^2}-2a^2(2n+\alpha+1)+a^4x^2\right]\psi_{n,\alpha}^{a}(x).$$
Finally, one gets
\begin{eqnarray}\label{M}
\mathcal{D}_c^{\alpha}(\psi_{n,\alpha}^{a})(x)&=&\mathcal{D}_c^{\alpha}(\psi_{n,\alpha}^{a})=-\dfrac{d}{dx} \left[ (1-x^2)\dfrac{d}{dx} \psi_{n,\alpha}^{a}(x) \right] - \left( \dfrac{\dfrac{1}{4}-\alpha^2}{x^2}-c^2x^2 \right)\psi_{n,\alpha}^{a}(x)\\&=&\left[(c^2-a^4)x^2+(\alpha+1)^2-1/4+2a^2(2n+\alpha+1)+2a^2x^2(2n+\alpha)+a^4x^4\right]\psi_{n,\alpha}^{a}(x)\nonumber\\&+&4\sqrt{2}a^{\alpha+3}x^{\alpha+5/2}e^{-\frac{a^2x^2}{2}}\frac{d\widetilde{L}_n^{\alpha}}{dx}(a^2x^2)\nonumber.	
\end{eqnarray}
By \cite{NIST} and using \eqref{recu.Lag}, we have $$a^2x^2\frac{d\widetilde{L}_n^{\alpha}}{dx}(a^2x^2)=(a^2x^2-(n+\alpha+1))\widetilde{L}_n^{\alpha}(a^2x^2)+\left((n+1)(n+\alpha+1)\right)^{1/2}\widetilde{L}_{n+1}^{\alpha}(a^2x^2),$$ 
$$a^2x^2\psi_{n,\alpha}^{a}(x)=-\left(n(n+\alpha)\right)^{1/2}\psi_{n-1,\alpha}^{a}(x)+(2n+\alpha+1)\psi_{n,\alpha}^{a}(x)-\left((n+1)(n+\alpha+1)\right)^{1/2}\psi_{n+1,\alpha}^{a}(x).$$
By using the two last equalities and \eqref{M}, one gets the desired result.
\end{proof}
\begin{proposition}
	Let $\big(\chi_{n,\alpha}(c)\big)$ be the sequence of eigenvalues of the differential operator $\mathcal{D}_c^{\alpha}$, then the sequence of coefficients $\big(\beta_k^n(c)\big)$ satisfy the following recurssion formula, for every $n\geq 0$ and $k\geq 0$, we have
	\begin{eqnarray}
   d_{k+2}^{k}(a)\beta_{k+2}^n(c)+d_{k+1}^{k}(a)\beta_{k+1}^n(c)+(d_{k}^{k}(a)-\chi_{n,\alpha}(c))\beta_k^n(c)+d_{k-1}^{k}(a)\beta_{k-1}^n(c)+d_{k-2}^{k}(a)\beta_{k-2}^n(c)=0,
	\end{eqnarray}
	with $d_{-1}^{1}(a)=d_{-2}^{0}(a)=d_{-1}^{0}(a)=0.$
\end{proposition}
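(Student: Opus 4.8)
The plan is to feed the Laguerre--function expansion \eqref{eq4} of $\varphi_{n,c}^{\alpha}$ into the eigenvalue identity $\mathcal{D}_c^{\alpha}(\varphi_{n,c}^{\alpha})=\chi_{n,\alpha}(c)\,\varphi_{n,c}^{\alpha}$, to apply $\mathcal{D}_c^{\alpha}$ term by term using the five--term formula of the preceding Lemma, and then to read off the coefficient of each basis element. Writing that Lemma compactly as $\mathcal{D}_c^{\alpha}(\psi_{m,\alpha}^{a})=\sum_{j=-2}^{2}d_{m}^{m+j}(a)\,\psi_{m+j,\alpha}^{a}$, I would first record
\[
\mathcal{D}_c^{\alpha}(\varphi_{n,c}^{\alpha})=\sum_{m\geq 0}\beta_{m}^{n}(c)\,\mathcal{D}_c^{\alpha}(\psi_{m,\alpha}^{a})=\sum_{m\geq 0}\sum_{j=-2}^{2}\beta_{m}^{n}(c)\,d_{m}^{m+j}(a)\,\psi_{m+j,\alpha}^{a}.
\]

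Next I would collect, for a fixed $k\geq 0$, all contributions proportional to $\psi_{k,\alpha}^{a}$. Such a term arises exactly when $m+j=k$, i.e. from the source index $m=k-j$ with $j\in\{-2,-1,0,1,2\}$, so that $m$ runs through $k+2,k+1,k,k-1,k-2$ and the associated coefficient is $d_{m}^{m+j}(a)=d_{k-j}^{k}(a)$. Using the orthonormality of $(\psi_{k,\alpha}^{a})$ in $L^2(0,\infty)$ to match this against $\chi_{n,\alpha}(c)\varphi_{n,c}^{\alpha}=\sum_{k\geq 0}\chi_{n,\alpha}(c)\,\beta_{k}^{n}(c)\,\psi_{k,\alpha}^{a}$, the coefficient of $\psi_{k,\alpha}^{a}$ must agree on both sides, which yields
\[
d_{k+2}^{k}(a)\beta_{k+2}^{n}(c)+d_{k+1}^{k}(a)\beta_{k+1}^{n}(c)+d_{k}^{k}(a)\beta_{k}^{n}(c)+d_{k-1}^{k}(a)\beta_{k-1}^{n}(c)+d_{k-2}^{k}(a)\beta_{k-2}^{n}(c)=\chi_{n,\alpha}(c)\,\beta_{k}^{n}(c).
\]
Transposing the right--hand term gives precisely the stated five--diagonal recursion, the coefficient of $\beta_k^n(c)$ becoming $d_k^k(a)-\chi_{n,\alpha}(c)$. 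For $k=0$ and $k=1$ the source indices $m=k-1,k-2$ become negative and carry no basis function; the conventions $d_{-1}^{1}(a)=d_{-2}^{0}(a)=d_{-1}^{0}(a)=0$ are exactly what is needed to suppress these phantom terms so that the single formula remains valid for all $k\geq 0$.

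The main point requiring care is the term--by--term application of the unbounded operator $\mathcal{D}_c^{\alpha}$ to the infinite series and the rearrangement of the resulting double sum. I would justify the interchange by invoking the graph--closedness of the differential operator together with the rapid (faster than any polynomial) decay of the coefficients $\beta_{m}^{n}(c)$: since each $d_{m}^{m+j}(a)$ grows only polynomially in $m$, the series $\sum_{m}\beta_{m}^{n}(c)\,d_{m}^{m+j}(a)\,\psi_{m+j,\alpha}^{a}$ converges in $L^2(0,\infty)$, and applying closedness to the partial sums $\sum_{m\leq N}\beta_m^n(c)\psi_{m,\alpha}^a$ identifies its limit with $\mathcal{D}_c^{\alpha}(\varphi_{n,c}^{\alpha})$. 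I would also emphasize that the coefficients appearing are the \emph{column} values $d_{k\pm 1}^{k},d_{k\pm 2}^{k}$ produced by expanding $\mathcal{D}_c^{\alpha}\psi_{m,\alpha}^{a}$ directly, and not a formal adjoint obtained by moving $\mathcal{D}_c^{\alpha}$ onto $\psi_{k,\alpha}^{a}$; indeed a glance at the Lemma shows $d_{m}^{m+1}(a)\neq d_{m+1}^{m}(a)$, so the representing matrix is not symmetric and integration by parts would introduce nonvanishing boundary contributions, which is precisely why the direct term--by--term expansion is the correct route.
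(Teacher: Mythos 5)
Your proof is correct and is exactly the argument the paper intends: the paper states this proposition without any proof at all, and the Remark immediately following it recasts the recursion as the eigensystem $MD=\chi_{n,\alpha}(c)D$ with $m_{k,j}=d_{j}^{k}(a)$, which is precisely your identification of the coefficient of $\psi_{k,\alpha}^{a}$ in $\mathcal{D}_c^{\alpha}\big(\sum_{m}\beta_{m}^{n}(c)\psi_{m,\alpha}^{a}\big)=\chi_{n,\alpha}(c)\sum_{k}\beta_{k}^{n}(c)\psi_{k,\alpha}^{a}$, conventions at $k=0,1$ included, while your justification of the term-by-term application of the unbounded operator is extra care the paper does not supply. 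One correction to your closing aside: for $\alpha>-1/2$ the boundary terms in integration by parts against the $\psi_{k,\alpha}^{a}$ vanish at both endpoints (these functions behave like $x^{\alpha+1/2}$ at the origin, so the Wronskian-type term is $O(x^{2\alpha+2})$, and they have Gaussian decay at infinity), hence the divergence-form operator $\mathcal{D}_c^{\alpha}$ is formally symmetric on $L^2(0,\infty)$ and its matrix in an orthonormal basis ought to satisfy $d_{m}^{m+1}(a)=d_{m+1}^{m}(a)$ --- so the asymmetry you point to in the Lemma's printed coefficients signals a likely typo in the Lemma rather than nonvanishing boundary contributions, though your derivation is unaffected since it nowhere relies on that remark.
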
 
\begin{remark}
	The previous system can be written by the following eigensystem
	$$MD=\chi_{n,\alpha}(c)D,  M=[m_{k,j}]_{k,j\geq 0},   D=[\beta_k^n(c)]^{T}_{k\geq 0},$$
	with $m_{k,j}=0,$ if $|k-j|>2$ and $m_{k,k}=d_{k}^{k}(a)$, 
	     $m_{k,k-1}=d_{k-1}^{k}(a)$,   $m_{k,k-2}=d_{k-2}^{k}(a),$ \\
	     $m_{k,k+1}=d_{k+1}^{k}(a)$ ,$m_{k,k+2}=d_{k+2}^{k}(a).$
	     Moreover, for $a=\sqrt{c},$ the previous matrix becomes diagonally dominant when $n<\frac{1}{2}(\frac{c}{4}-(\alpha+1))$ where we can use the Inverse Power Method for the computation of CPSWFs when c is large compared to the prolate's order.
\end{remark}
\begin{proposition}
Let $c > 0$ be a positive real number, then there exists a constant $\delta_0$ and a positive integer $k_0$ such that for any integer $n\geq \max{(\frac{c}{2},\frac{c}{\pi}+k_0)}$, $\chi_{n,\alpha}(c)> \max{(2\alpha^2-1/2,c^2(4\alpha^2-1))}$ and $k\geq n$ , we have
\begin{equation}
|\beta^n_k(c)|\leq \frac{1}{\sqrt{2e\delta_0}}\dfrac{M_{\alpha}(k)}{(\alpha+1)}(\frac{c}{a})^{\alpha+1}e^{-(k+\alpha+1/2)\ln(\frac{k+\alpha+1/2}{2})+\frac{A}{2}(2n+\alpha+1)\ln(\frac{\pi}{c}(n+k_0))}.
\end{equation}
Where $M_{\alpha}(k)=\begin{cases}\frac{(\alpha+1)_k}{k!}& \mbox{if }  \alpha\geq 0\\(2-\frac{(\alpha+1)_k}{k!})& \mbox{if }  -1\leq\alpha\leq 0\end{cases}$.
\end{proposition}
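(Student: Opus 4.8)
The starting point is the identity $\beta_k^n(c)=\langle\varphi_{n,c}^\alpha,\psi_{k,\alpha}^a\rangle_{L^2(0,\infty)}$ together with the fact that both factors have explicitly known Hankel images. Since $\mathcal H^\alpha$ is unitary on $L^2(0,\infty)$, Parseval combined with \eqref{eq0.3} and \eqref{eq0.5} turns the coefficient into an integral over the band $[0,c]$; after rescaling $[0,c]$ back to $[0,1]$ one reaches a representation of the shape
$$\beta_k^n(c)=\frac{(-1)^k}{a\,\mu_{n,\alpha}(c)}\int_0^1\varphi_{n,c}^\alpha(v)\,\psi_{k,\alpha}^a\!\big(\tfrac{c}{a^2}v\big)\,dv .$$
This is the representation I would carry forward: it isolates the factor $1/\mu_{n,\alpha}(c)$, which will become the growing-in-$n$ exponential, and it displays the normalized Laguerre function at an argument whose size is governed by $c/a$.

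For the decay in $k$ I would substitute $\psi_{k,\alpha}^a(w)=\sqrt2\,a^{\alpha+1}w^{\alpha+1/2}e^{-(aw)^2/2}\widetilde L_k^\alpha(a^2w^2)$ and bound $\widetilde L_k^\alpha$ by Lemma \ref{lemma 0.1}, which gives $|\widetilde L_k^\alpha(t)|\le\sqrt{k!/\Gamma(k+\alpha+1)}\,t^k/k!$ once $t\ge 4k+C_\alpha$; on the complementary part of the range the monotonicity of $L_k^\alpha$ near the origin lets one retain the single value $L_k^\alpha(0)=(\alpha+1)_k/k!=M_\alpha(k)$. The factor $(c/a)^{\alpha+1}$ emerges from the prefactor $a^{\alpha+1}(c/a^2)^{\alpha+1/2}$ and the power $w^{\alpha+1/2}$, while integrating the resulting $v^{2k+\alpha+1/2}e^{-c^2v^2/(2a^2)}$ produces a Gamma factor through the tail inequality $\int_A^\infty t^\beta e^{-t^2}\,dt\le A^{\beta-1}e^{-A^2}$ already used in the proof of Theorem 1. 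Feeding $1/\Gamma(k+\alpha+1)$ into the lower Stirling bound $\Gamma(x+1)\ge\sqrt{2e}\,((x+1/2)/e)^{x+1/2}$ then yields simultaneously the constant $1/\sqrt{2e\delta_0}$ and the super-exponential factor $\exp\!\big(-(k+\alpha+\tfrac12)\ln\tfrac{k+\alpha+1/2}{2}\big)$.

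The factor in $n$ tracks the magnitude of $1/|\mu_{n,\alpha}(c)|$ dictated by the spectral decay: writing $\lambda_n^\alpha(c)=c|\mu_{n,\alpha}(c)|^2$ and invoking the super-exponential estimate of Theorem 2 (valid for $n>ec/4$), one produces an expression of the form $\exp\!\big(\tfrac A2(2n+\alpha+1)\ln(\tfrac\pi c(n+k_0))\big)$. The two hypotheses $n\ge\max(c/2,\,c/\pi+k_0)$ and $\chi_{n,\alpha}(c)>\max(2\alpha^2-\tfrac12,\,c^2(4\alpha^2-1))$ are exactly what place $n$ in this regime and fix the integer $k_0$, the constant $\delta_0$ and the exponent $A$; multiplying this by the estimate of the previous paragraph gives the stated inequality.

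The delicate point, and the step I expect to require the most care, is that the argument $a^2w^2$ of the Laguerre factor stays in a bounded range as $v$ runs over $[0,1]$, so Lemma \ref{lemma 0.1}, which needs a large argument, does not apply on the whole interval. One must split at the turning value $v_0\sim a\sqrt{4k+C_\alpha}/c$ and show that the remaining near part, where $\widetilde L_k^\alpha$ merely oscillates, contributes no more than the size already extracted from the tail; this is precisely where the band-limited structure of $\varphi_{n,c}^\alpha$ and the lower bound on $\chi_{n,\alpha}(c)$ must be combined, and where one must be careful not to lose or double-count the $n$-dependence hidden in $1/\mu_{n,\alpha}(c)$. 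The secondary difficulty is purely computational: matching the constants $\delta_0$, $k_0$ and $A$ coming from the Stirling bound \eqref{BoundJ_2} and the spectral estimates so that the prefactor is exactly $\tfrac{1}{\sqrt{2e\delta_0}}\tfrac{M_\alpha(k)}{\alpha+1}(c/a)^{\alpha+1}$.
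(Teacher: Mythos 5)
Your first step coincides with the paper's: Parseval together with \eqref{eq0.3} and \eqref{eq0.5} gives $\beta_k^n(c)=\frac{(-1)^k}{a\,\mu_{n,\alpha}(c)}\big\langle \varphi_{n,c}^{\alpha},\psi_{k,\alpha}^a(\tfrac{c}{a^2}\,\cdot)\big\rangle_{L^2(0,1)}$. But your treatment of the $k$-decay has a genuine gap, and you yourself put your finger on it without resolving it. In the regime of the statement the Laguerre argument satisfies $a^2w^2=(cv/a)^2\le (c/a)^2$ for $v\in[0,1]$ (equal to $c$ when $a=\sqrt c$), while Lemma~\ref{lemma 0.1} requires an argument of size at least $4k+C_\alpha\ge 4n+C_\alpha\ge 2c+C_\alpha$; so for every admissible $k\ge n\ge c/2$ your ``turning value'' $v_0$ lies beyond $1$, the far region is empty, and the whole integral sits in the part for which you offer no estimate. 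Your fallback claim that ``monotonicity of $L_k^\alpha$ near the origin lets one retain the single value $L_k^\alpha(0)$'' is false: $L_k^\alpha$ has on that range on the order of $k$ sign changes, and $|L_k^\alpha(x)|\le L_k^\alpha(0)$ is not a triviality there. The missing ingredient is the global pointwise bound of Michalska and Szynal \cite{Michalska-Szynal}, $e^{-x/2}|L_k^{\alpha}(x)|\lesssim M_\alpha(k)$ valid for all $x\ge 0$, which is a nontrivial theorem, not a local monotonicity remark. The paper applies Cauchy--Schwarz first (so $\varphi_{n,c}^{\alpha}$ disappears via $\|\varphi_{n,c}^{\alpha}\|_{L^2(0,1)}=1$, and no ``band-limited structure'' or bound on $\chi_{n,\alpha}(c)$ is needed at this stage), then uses the Michalska--Szynal bound, whose $e^{x/2}$ exactly cancels the Gaussian in $\psi_{k,\alpha}^a$, yielding $\|\psi_{k,\alpha}^a(\tfrac{c}{a^2}\,\cdot)\|_{L^2(0,1)}\le \frac{c^{\alpha+1/2}M_\alpha(k)}{a^{\alpha}(\alpha+1)\Gamma(k+\alpha+1)}$ with no splitting of the interval at all; Batir's inequality \cite{N. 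B} on $\Gamma(k+\alpha+1)$ then produces the super-exponential factor, as you correctly anticipated. Your route, as written, never produces the factor $M_\alpha(k)$ that appears in the statement.

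The second gap is in the $n$-factor: you invoke the super-exponential estimate of Theorem 2, but that theorem bounds $\lambda_n^{\alpha}(c)=c|\mu_{n,\alpha}(c)|^2$ from \emph{above}, hence bounds $1/|\mu_{n,\alpha}(c)|$ from \emph{below} --- the wrong direction for an upper bound on $|\beta_k^n(c)|$. What is needed is a \emph{lower} bound of the form $\lambda_n^{\alpha}(c)\ge \delta_0\, e^{-A(2n+\alpha+1)\ln(\frac{\pi}{c}(n+k_0))}$, which is nowhere proved in this paper: it is quoted from \cite{Karoui-Boulsane}, and the hypotheses $n\ge\max(\frac c2,\frac c\pi+k_0)$ and $\chi_{n,\alpha}(c)>\max(2\alpha^2-\frac12,c^2(4\alpha^2-1))$ are precisely the conditions of that cited estimate; they cannot be recovered from Theorem 2, whose hypothesis $n>ec/4$ plays no role here.
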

	\begin{proof}
		First, from the Parseval's equality, \eqref{eq0.5} and \eqref{eq0.3}, we have
		\begin{eqnarray*}
	\beta^n_k(c)&=&\left <\varphi_{n,c}^{\alpha},\psi_{k,\alpha}^a\right >_{L^2(0,\infty)}=\left <\mathcal{H}^{\alpha}(\varphi_{n,c}^{\alpha}),\mathcal{H}^{\alpha}(\psi_{k,\alpha}^a)\right >_{L^2(0,\infty)}\\&=&\frac{(-1)^k}{ac\mu_{n,\alpha}(c)}\left <\varphi_{n,c}^{\alpha}(\frac{.}{c})\chi_{[0,c]}, \psi_{k,\alpha}^a(\frac{.}{a^2})\right >_{L^2(0,\infty)}\\&=&\frac{(-1)^k}{a\mu_{n,\alpha}(c)}\left <\varphi_{n,c}^{\alpha}, \psi_{k,\alpha}^a(\frac{c}{a^2}.)\right >_{L^2(0,1)}
	\end{eqnarray*}
Then, by the Cauchy-Schwarz inequality, one gets
\begin{eqnarray*}
	|\beta^n_k(c)|\leq \frac{1}{a|\mu_{n,\alpha}(c)|} ||\psi_{k,\alpha}^a(\frac{c}{a^2}.)||_{L^2(0,1)}.
	\end{eqnarray*}	
By \cite{Michalska-Szynal}, we have 
$$||\psi_{k,\alpha}^a(\frac{c}{a^2}.)||_{L^2(0,1)}\leq \dfrac{c^{\alpha+1/2}M_{\alpha}(k)}{a^{\alpha}(\alpha+1)\Gamma(k+\alpha+1)},$$
where $M_{\alpha}(k)=\begin{cases}\frac{(\alpha+1)_k}{k!}& \mbox{if }  \alpha\geq 0\\(2-\frac{(\alpha+1)_k}{k!})& \mbox{if }  -1\leq\alpha\leq 0\end{cases}$.
Finally, from \cite{N. B}, \cite{Karoui-Boulsane} and the last inequality, one gets
\begin{eqnarray*}
	|\beta^n_k(c)|&\leq& \dfrac{c^{\alpha+1/2}M_{\alpha}(k)}{a^{\alpha+1}(\alpha+1)\Gamma(k+\alpha+1)}\frac{1}{|\mu_{n,\alpha}(c)|}\\&\leq& \dfrac{c^{\alpha+1/2}M_{\alpha}(k)}{\sqrt{2e}a^{\alpha+1}(\alpha+1)}\left(\frac{e}{k+\alpha+1/2}\right)^{k+\alpha+1/2}\sqrt{\frac{c}{\delta_0}}e^{\frac{A}{2}(2n+\alpha+1)\ln(\frac{\pi}{c}(n+k_0))}\\&\leq& \dfrac{c^{\alpha+1}M_{\alpha}(k)}{\sqrt{2e\delta_0}a^{\alpha+1}(\alpha+1)}e^{-(k+\alpha+1/2)\ln(\frac{k+\alpha+1/2}{2})+\frac{A}{2}(2n+\alpha+1)\ln(\frac{\pi}{c}(n+k_0))}.
	\end{eqnarray*}
\end{proof}

\end{document}